\documentclass[11pt,a4paper]{article}
\usepackage[T1]{fontenc}
\usepackage[utf8]{inputenc}
\usepackage{lmodern}
\usepackage[margin=27mm]{geometry}
\usepackage{amsmath,amssymb,amsthm,mathtools}
\usepackage{microtype,enumitem,needspace}
\usepackage{xcolor}
\usepackage{hyperref}
\hypersetup{colorlinks=true,linkcolor=blue!45!black,citecolor=blue!45!black,
urlcolor=blue!45!black,
pdftitle={Functions determining (q+3)/2 directions over the finite field with q elements},
pdfauthor={Bence Csajb\'ok}}
\numberwithin{equation}{section}
\newtheorem{theorem}{Theorem}[section]
\newtheorem{lemma}[theorem]{Lemma}
\newtheorem{proposition}[theorem]{Proposition}
\newtheorem{corollary}[theorem]{Corollary}
\theoremstyle{remark}

\newtheorem*{remark*}{Remark}
\newcommand{\F}{\mathbb F_q}
\newcommand{\Fp}{\mathbb F_p}
\newcommand{\AG}{\operatorname{AG}}
\newcommand{\ord}{\operatorname{ord}}
\newcommand{\Stab}{\operatorname{Stab}}
\newcommand{\UU}{\mathcal U}
\setlist[enumerate]{itemsep=3pt,topsep=5pt}
\allowdisplaybreaks[1]
\title{Functions determining $(q+3)/2$ directions\\over the finite field with $q$ elements}
\author{Bence Csajb\'ok\thanks{Department of Computer Science,
ELTE E\"otv\"os Lor\'and University, H-1117 Budapest,
P\'azm\'any P. stny. 1/C, Hungary.
Email: \href{mailto:bence.csajbok@ttk.elte.hu}{\nolinkurl{bence.csajbok@ttk.elte.hu}}.}}
\date{22 October 2026}
\begin{document}
\maketitle
\vspace{-1.5em}
\begin{abstract}
Let $q=p^n$, where $p>2$ is prime. We prove that every $q$-set
$U\subseteq\AG(2,q)$ determining exactly $(q+3)/2$ directions is affinely
equivalent to the graph of $x^{(q+1)/2}$. For $n\ge2$, we also prove
that a $q$-set determining at most $(q+p)/2$ directions is
either an affine $\Fp$-linear set or affinely equivalent to this graph.
The classification for $n=1$ is due to Lov\'asz and Schrijver.
For $n=2$, the classification and the stated stability result follow
from results of G\'acs, Lov\'asz, and Sz\H onyi and the direction
theorems of Blokhuis, Ball, Brouwer, Storme, and Sz\H onyi, completed
by Ball. These direction theorems establish affine $\Fp$-linearity
for functions with fewer than $(q+3)/2$ determined directions.
Our proof uses R\'edei polynomials, results of G\'acs, Lov\'asz, and
Sz\H onyi on their coefficients, and an earlier polynomial comparison
lemma of the author.
\end{abstract}

\section{Introduction}
Let $q=p^n$, where $p>2$ is prime, and let $\F$ be the field of $q$
elements. The affine plane $\AG(2,q)$ extends to the projective plane
$\operatorname{PG}(2,q)$ by adding $q+1$ points on a new line
$\ell_\infty$, called the line at infinity. The points of
$\ell_\infty$ are called \emph{directions} and correspond to the
parallel classes of affine lines. An affine line $\ell$ has direction
$d\in\ell_\infty$, written $\mathrm{dir}(\ell)=d$, if its projective
extension contains $d$. In
$\AG(2,q)$, a parallel class consists of the lines with the same
slope, so we also identify directions with the elements of
$\F\cup\{\infty\}$, where $\infty$ denotes the vertical direction.

A point set $U\subseteq\AG(2,q)$ \emph{determines} a direction $d$
if there are distinct points $P,Q\in U$ such that the line $PQ$ has
direction $d$. We write $D_U\subseteq\ell_\infty$ for the set of
directions determined by $U$, and $|D_U|$ for their number.
Affine equivalence means equivalence under an invertible
$\F$-affine transformation. If $U$ is a \emph{$q$-set}, that is, a set
of $q$ points, and $|D_U|<q+1$, then choosing a direction not determined by $U$
as $\infty$ represents $U$ as the graph of a function $\F\to\F$; thus
the point-set and function formulations are interchangeable.

Put
\[
\UU_q=\{(x,x^{(q+1)/2}):x\in\F\}.
\]
We call $x^{(q+1)/2}$ the \emph{absolute value function} over $\F$, since
\[
x^{(q+1)/2}=
\begin{cases}
x,&\text{if $x$ is a square in $\F$ (including zero)},\\
-x,&\text{if $x$ is a nonsquare in $\F$}.
\end{cases}
\]
When $q\equiv3\pmod4$, squares and nonsquares play the roles of positive
and negative numbers. When $q\equiv1\pmod4$, the analogy is only formal:
if $x$ is a nonsquare, then $-x$ is still a nonsquare.
The set $\UU_q$ determines exactly $(q+3)/2$ directions
(Lemma~\ref{lem:two-lines}).
An \emph{affine $\Fp$-linear set} of size $q$ is a coset of an
$n$-dimensional $\Fp$-subspace of $\F^2$.

\Needspace{6\baselineskip}
\begin{theorem}\label{thm:main}
Let $q=p^n$, where $p>2$ is prime and $n\ge1$. If
$U\subseteq\AG(2,q)$ has $q$ points and
\[
|D_U|=\frac{q+3}{2},
\]
then $U$ is affinely equivalent to $\UU_q$.
\end{theorem}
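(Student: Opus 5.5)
We plan to work with the Rédei polynomial. After an affine change of coordinates we may assume $\infty\notin D_U$, so $U=\{(x,f(x)):x\in\F\}$ for a function $f\colon\F\to\F$. We may further assume $f(0)=0$. We set
\[
R(X,Y)=\prod_{x\in\F}\bigl(X+xY-f(x)\bigr)=X^q+\sum_{j=1}^{q}h_j(Y)X^{q-j},\qquad \deg h_j\le j.
\]
A direction $y\in\F$ is \emph{not} determined exactly when $R(X,y)=X^q-X$. Hence the number $N=q+1-|D_U|=(q-1)/2$ of non-determined slopes (with $\infty$ among the non-determined ones) is controlled by the common roots of $h_1,\dots,h_{q-2}$ and of $h_{q-1}+1$.

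The first step is to use the results of Gács, Lovász and Szőnyi on these coefficients. The polynomial $h_j$ vanishes on at least $(q-1)/2-1$ values once $j<q-1$. It therefore vanishes identically for every $j$ with $j<(q-3)/2$ that is not forced to be large. This gives the lacunary structure: the first nonzero coefficient occurs at some $j=s$ where $s$ is a power of $p$ or $s\ge (q-1)/2$.

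The second step is the case split. If $s=p^e$ with $e\ge1$, the standard Blokhuis–Ball–Brouwer–Storme–Szőnyi analysis forces $f$ to be $\mathbb F_{p^e}$-linear. The number of directions is then $q/p^e+1$ or lies in the range $[(q+q/p^e)/(\ldots)]$, and $(q+3)/2$ is not attainable for $p>2$ because $q/p^e+1\ne(q+3)/2$. This case can be excluded by a counting check. So $s=1$ is impossible when the number of directions is small, and we are left with the case where the first nontrivial relation occurs at $j\approx (q-1)/2$.

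The third step handles that remaining case. Writing $Y$-substitutions, we expect to show that $f^{(q-1)/2}$, or the polynomial $\sum_x (X+xy-f(x))^{(q-1)/2}$, vanishes for $(q-1)/2$ values $y$. After an affine shift making $0$ and $\infty$ determined appropriately, the non-determined slopes then form a coset-like set of size $(q-1)/2$. We would like to show that this set is the set of $y$ with $y^2-1$ a nonsquare, or an equivalent set, by invoking the author's earlier polynomial comparison lemma. That lemma compares $\prod_{y\notin D}(T-y)$, of degree $(q-1)/2$, with the coefficient polynomials $h_{(q-1)/2}$ and $h_{(q+1)/2}$. It should yield that $f(x)=a x+b x^{(q+1)/2}$ with $b\ne0$, $a\neq\pm b$, which is affinely equivalent to $x^{(q+1)/2}$ via $(x,y)\mapsto(x,(y-ax)/b)$.

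The main obstacle is this third step: pinning down the exact shape of $f$ from the vanishing of $h_j$ for $j<(q-1)/2$ together with the $(q-1)/2$ common zeros. We would first try to prove that, for every non-determined $y$, the function $x\mapsto f(x)-yx$ is a permutation. Combined with the vanishing of power sums $\sum_x (f(x)-yx)^k$ for $k<(q-1)/2$, this should force $\deg f\le (q+1)/2$. A degree argument, Lemma~\ref{lem:two-lines}-style, then says that the graph lies on two lines through the origin after a suitable transformation, which is precisely $\UU_q$ up to affinity.
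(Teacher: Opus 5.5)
There is a genuine gap. Step 3, which you flag as the main obstacle, is the whole content of the theorem, and the route you sketch for it cannot work.

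Steps 1--2 only recover what the paper takes from G\'acs--Lov\'asz--Sz\H onyi in Lemma~\ref{lem:gap}. Since $\deg h_j\le j-1$ for $j\le q-2$ and $h_j$ vanishes at the $(q-3)/2$ non-determined slopes, $h_j=0$ for all $j<(q-1)/2$. Hence $\deg f\le(q+1)/2$, and $h_{(q-1)/2}$ is a nonzero multiple of the product of the $Y-y$ over those slopes. So the power-of-$p$ branch and the appeal to the linearity theorem never arise.

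Your finish then uses the permutation property of $f(x)-yx$ at non-determined $y$, together with vanishing power sums, to ``force $\deg f\le(q+1)/2$'', followed by ``a degree argument, Lemma~\ref{lem:two-lines}-style''. This does not go anywhere. The permutation property is just the definition of a non-determined slope. The degree bound is the starting point, not a conclusion. Lemma~\ref{lem:two-lines} \emph{assumes} that $U$ lies on two lines. Nothing in your sketch produces that configuration or the form $ax+bx^{(q+1)/2}$.

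You also misdescribe the comparison lemma. Lemma~\ref{lem:index-comparison} compares $F_P(W)=\prod_{x\ne u}\bigl((x-u)W-(f(x)-f(u))\bigr)$ with $F_Q$ for two points $P,Q$, each having a unique singleton direction, on a line through at least three points of $U$. It concludes only that their intersection numbers agree, and it enters only in the last steps for $p=5$ and $p=3$. (Also, for $\UU_q$ the non-determined slopes are the $y$ with $y^2-1$ a nonzero square, not a nonsquare.)

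The missing idea is a way to turn the lacunary shape into geometry. The paper does this in several steps:
\begin{enumerate}
\item It specializes at each determined slope. With $P=R(X,y)$ and $A=P-X^q+X$ it proves $AP'=cP$ with $c=a(y)^2/2$. So $A$ is squarefree with $M$ roots, nonempty intersections have size prime to $p$, and secants have size $\equiv c\pmod p$. Differentiating twice forces $c\in\{2,1/2\}$.
\item For $p>3$, summing $a(y)^2/2$ over $\F$ gives $h\equiv2\pmod p$ directions of type $1/2$.
\item The divisibility $a\mid a_1$ makes all centroid lines concurrent. Thus the at least $M-K$ points of tangency in a type-$1/2$ direction are collinear, and for $p\ge7$ they give a secant with more than $(q+3)/4$ points.
\item A secant that large forces $A=X\pm X^M$ after translation, so $U$ lies on two nonparallel lines. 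Only now does Lemma~\ref{lem:two-lines} apply.
\end{enumerate}
That settles $p\ge7$. For $p=5$ the collinear count equals $(q+3)/4$ exactly. For $p=3$ the two types coincide. So the paper additionally needs centroid reflections, the coincidence results, and the maximal-line or reflection-counting arguments. The case $n=1$ is quoted from Lov\'asz--Schrijver. Your proposal stops exactly where this work begins.
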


\begin{theorem}\label{thm:stability}
Let $q=p^n$, where $p>2$ is prime and $n\ge2$. If
$U\subseteq\AG(2,q)$ has $q$ points and
\[
|D_U|\le\frac{q+p}{2},
\]
then either $U$ is an affine $\Fp$-linear set, or it is affinely
equivalent to $\UU_q$. Consequently no $q$-set determines $N$ directions
with $(q+3)/2<N\le(q+p)/2$.
\end{theorem}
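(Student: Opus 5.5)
The proof reduces Theorem~\ref{thm:stability} to Theorem~\ref{thm:main} combined with the classical direction theorem for functions over $\F$. Let $N=|D_U|\le (q+p)/2$. Since $N<q+1$, we may choose an undetermined direction as $\infty$ and write $U$ as the graph of some $f\colon\F\to\F$.

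We use the theorem of Blokhuis, Ball, Brouwer, Storme and Sz\H onyi, completed by Ball. Let $s=p^e$ be the largest power of $p$ such that every line meets $U$ in a multiple of $s$ points. Then one of the following holds:
\begin{enumerate}
\item $s=1$ and $N\ge (q+3)/2$;
\item $\F$ is a subfield of $\mathbb F_q$ of order $s$, with $q/s+1\le N\le (q-1)/(s-1)$;
\item $s=q$ and $N=1$.
\end{enumerate}
In case (ii) and case (iii), Ball's theorem also yields that $f$ is $\mathbb F_s$-linear up to affine equivalence, so $U$ is an affine $\mathbb F_s$-linear set and hence an affine $\Fp$-linear set. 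Note that case (ii) genuinely occurs within our range, since $q/s+1\le q/p+1\le (q+p)/2$. No extra work is needed there: the conclusion of the theorem asserts only $\Fp$-linearity. So if $s>1$ we are done.

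Now suppose $s=1$. Then $(q+3)/2\le N\le (q+p)/2$, and the real content is to show that $N=(q+3)/2$ is forced. Once this is established, Theorem~\ref{thm:main} gives that $U$ is affinely equivalent to $\UU_q$.

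The gap statement is therefore the main obstacle. I would attack it with the R\'edei polynomial
\[
R(X,Y)=\prod_{x\in\F}\bigl(X+xY-f(x)\bigr)
=\sum_j \sigma_j(Y)X^{q-j}
\]
and the G\'acs--Lov\'asz--Sz\H onyi results on its coefficients. For a non-determined direction $y$, $R(X,y)=X^q-X$, so $\sigma_j(y)=0$ for $1\le j\le q-2$. Since $\deg\sigma_j\le j$, it follows that $\sigma_j\equiv0$ whenever $j<q+1-N$. When $N\le (q+p)/2$, this kills all coefficients with $j\le (q-p)/2$.

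The key step is the structural lemma of G\'acs, Lov\'asz and Sz\H onyi for $s=1$. Applied with this vanishing range, it should show that $f$ takes at most two slopes' worth of "linear pieces." More precisely, it should give that $\prod_x (X+xY-f(x))$ factors through $(X^{(q-1)/2}\pm\cdots)$-type components, namely that $U$ is contained in the union of two lines, or that the pattern of $f$ is governed by $x^{(q+1)/2}$. The first thing I would try is to show that the polynomial $\sum_x (X+xY-f(x))^{(q-1)/2}$, or the quadratic character of the line values, has degree too small unless it equals that of the absolute value function. At that point the author's polynomial comparison lemma would be applied to match it against $x^{(q+1)/2}$. Here $n\ge2$ is used so that $p<q$ and the vanishing range exceeds half; for $n=1$ the bound $(q+p)/2$ would be trivial.

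This step would yield $U\subseteq$ two lines, and Lemma~\ref{lem:two-lines} then computes $N=(q+3)/2$ exactly. The final clause of the theorem follows immediately: an affine $\Fp$-linear $q$-set that is not a line determines at most $q/p+1\le (q+3)/2$ directions in case (ii), and $\UU_q$ determines exactly $(q+3)/2$.
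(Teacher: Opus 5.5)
Your outer reduction is sound. For $N<(q+3)/2$, the Blokhuis--Ball--Brouwer--Storme--Sz\H onyi theorem completed by Ball gives $s\ge3$ and $\mathbb F_s$-linearity. For $p=3$ the range collapses to $N=(q+3)/2$, where Theorem~\ref{thm:main} applies. So for $p>3$ everything hinges on showing that no $q$-set has $(q+3)/2<N\le(q+p)/2$. That gap statement is the whole content of the theorem, and \emph{your proposal does not prove it}.

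There is no ``structural lemma of G\'acs, Lov\'asz and Sz\H onyi for $s=1$'' that places $U$ on two lines. For general $n$ their results give only coefficient bounds and the lacunary normal form used in Lemma~\ref{lem:gap}: graph coordinates with $\deg f=M$ and $h_1=\dots=h_{m-1}=0$. Their classification and gap are proved only for $q=p^2$, and even there $p=5$ is left to a computer. Your fallback ideas (the degree of $\sum_x(X+xY-f(x))^{(q-1)/2}$, a quadratic-character argument) are not carried out. Lemma~\ref{lem:index-comparison} also cannot ``match $f$ against $x^{(q+1)/2}$''. It only says that two points of $U$ on a line with at least three points of $U$, each having exactly one singleton direction, see the same intersection numbers in every direction.

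Concretely, the paper has to build the following from the lacunary form, and none of it, or a substitute, appears in your plan:
\begin{enumerate}[label=(\alph*)]
\item In each determined direction, $AP'=cP$ with $c=a(y)^2/2$. So every line in that direction meeting $U$ does so in a number of points prime to $p$. Differentiating $AA'=(1+c)A+c(X^q-X)$ twice then forces $c\in\{2,1/2\}$ (Lemma~\ref{lem:special}).
\item $a\mid a_1$ (Lemma~\ref{lem:quotient}, genuinely needed when $\delta>0$). This yields concurrent centroid lines carrying all singleton points.
\item Summing $a(Y)^2/2$ over $\F$ gives $3h\equiv6+4\delta\pmod p$. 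Hence a type-$1/2$ direction exists whenever $N<(q+p)/2$.
\item For $p\ge7$, such a direction has at least $M-K>(q+3)/4$ collinear singleton points. By Lemmas~\ref{lem:large-secant} and~\ref{lem:two-lines} this forces two lines and $\delta=0$.
\item For $p=5$ one only gets $M-K=(q+3)/4$, so the maximal-line analysis of Section~\ref{sec:five} is needed.
\item At $N=(q+p)/2$ one has $6+4\delta=2p$, so the congruence allows $h=0$. This case must be excluded separately, via centroid reflections and Lemma~\ref{lem:reflection-secant} (Proposition~\ref{prop:equality}).
\end{enumerate}

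Two smaller slips. First, affine $\Fp$-linear sets determine at most $(q-1)/(p-1)$ directions, not $q/p+1$; the latter is the lower bound in case (ii). The final clause survives because $(q-1)/(p-1)<(q+3)/2$. Second, your vanishing claim $\sigma_j\equiv0$ for $j<q+1-N$ needs $\deg\sigma_j\le j-1$. This holds because the $Y^j$-coefficient is an elementary symmetric function of $\F$, which vanishes; $\deg\sigma_j\le j$ is not enough.
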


For $n=2$, these conclusions were already known.
G\'acs--Lov\'asz--Sz\H onyi \cite[Theorem~1.3]{GLS} prove that, for
$q=p^2$ and $|D_U|\ge(q+3)/2$, either $U$ is affinely equivalent to
$\UU_q$ or $|D_U|\ge(q+p)/2+1$. Together with the established
linearity theorem for $|D_U|<(q+3)/2$ \cite{BBBS,Ball}, this implies
both theorems above when $n=2$. Their proof leaves the remaining
case $p=5$ to a computer check \cite[p.~200]{GLS}.
The characteristic-five argument below gives a computer-free proof,
including $q=25$, using the elementary polynomial comparison of line
intersection numbers in Lemma~\ref{lem:index-comparison}.
For $q=p^2$, the bound $(q+p)/2$ in Theorem~\ref{thm:stability} is
sharp: the example of Polverino, Sz\H onyi, and Weiner \cite{PSW}
determines $(q+p)/2+1$ directions \cite{GLS}. It is neither affine
$\Fp$-linear, since such sets determine at most $(q-1)/(p-1)$
directions, nor affinely equivalent to $\UU_q$.

For a graph, the first alternative says that its reduced graph polynomial has the form
$c+\sum_{i=0}^{n-1}c_iX^{p^i}$. When $p=3$, the hypothesis in
Theorem~\ref{thm:stability} is $|D_U|\le(q+3)/2$: the strict inequality gives
the affine $\Fp$-linear alternative, and equality is classified by
Theorem~\ref{thm:main}. For $p>3$, equality $|D_U|=(q+p)/2$ is
excluded by Proposition~\ref{prop:equality}.

\subsection*{Historical background}
The polynomial method for this problem originates in R\'edei's theory
of lacunary polynomials. The prime-field lower bound $(p+3)/2$ is due
to R\'edei and Megyesi \cite{Redei}. The characterization of the
equality case also follows implicitly from R\'edei's results
\cite{Redei}; Lov\'asz and Schrijver stated and proved it explicitly
\cite{LS}. Dress, Klin, and Muzychuk independently obtained
the prime-field bound in their work on $p$-configurations \cite{DKM}.
G\'acs established a further gap above $(p+3)/2$ over prime fields
\cite{Gacs}.

The extension-field direction theory was developed by Blokhuis,
Brouwer, and Sz\H onyi \cite{BBS}, then by Blokhuis, Ball, Brouwer,
Storme, and Sz\H onyi \cite{BBBS}.
Ball completed the cases $s=2,3$ left open in \cite{BBBS}, using
Hasse derivatives \cite{Ball}: the lower bound $|D_U|\ge q/2+1$ for
$s=2$, and $\mathbb F_3$-linearity for $s=3$. Here $s$ is the largest
power of the characteristic dividing $|U \cap \ell|$ for every line
$\ell$ with direction in $D_U$. The $s=3$ result supplies the additional
linearity case needed here when $p=3$.
G\'acs, Lov\'asz, and Sz\H onyi \cite{GLS} developed a polynomial
reduction for proper powers $q$ of $p$ and $(q+3)/2\le|D_U|\le(q+p)/2$,
and obtained the classification and gap when $q=p^2$.

Our earlier structural results for bisecants of R\'edei-type blocking
sets \cite{Csajbok}, in the form of Lemma~\ref{lem:index-comparison},
supply the final steps in characteristics five and three
(Sections~\ref{sec:five} and~\ref{sec:three-reflections}) and in the
exclusion of $|D_U|=(q+p)/2$ when $p>3$ (Proposition~\ref{prop:equality}).
We include the short polynomial argument from our lecture
notes \cite[Theorem~1.4.7]{CsajbokNotes}.
For a broader account of direction problems, see De Beule's
survey \cite{DeBeule}. In even characteristic, Sziklai's recent
preprint treats the remaining case $|D_U|=q/2+1$ \cite{Sziklai26}.

\subsection*{Outline of the proof}
Let $n\ge2$ and $(q+3)/2\le|D_U|\le(q+p)/2$. The starting point is the
lacunary form of the R\'edei polynomial (Lemma~\ref{lem:gap}).
Specializing it to a determined direction, the intersection
identities of Lemma~\ref{lem:special} show that every nonempty line
intersection has size prime to $p$. Its \emph{centroid} is therefore
well-defined as the average of its points in $\F^2$. For each determined
direction $d$, the centroids of the nonempty intersections with lines of
direction $d$ lie on a line $L_d$, called the \emph{centroid line} of $d$.
All these lines are concurrent (Lemma~\ref{lem:centroid}).
Bounds on the numbers of points of tangency in these directions,
combined with the coincidence results Corollary~\ref{cor:coincidence}
and Lemma~\ref{lem:boundary-coincidence}, yield affine symmetries of
$U$ (Lemmas~\ref{lem:type-two-reflections} and~\ref{three:lem:reflection}).
A large secant then forces $U$ onto two lines
(Lemma~\ref{lem:large-secant}). The remaining configurations are
excluded by the polynomial comparison in Lemma~\ref{lem:index-comparison}
and a quadratic equation restricting which directions can have a
prescribed centroid line in characteristic five
(Proposition~\ref{prop:five}). Lemma~\ref{lem:reflection-secant} gives the common
counting argument used in characteristic three and at $|D_U|=(q+p)/2$
(Section~\ref{sec:three-reflections} and Proposition~\ref{prop:equality}).

The external inputs are the coefficient bounds and the polynomial
reduction of G\'acs, Lov\'asz, and Sz\H onyi
\cite[Lemma~3.3, Corollary~3.7(i), Theorem~3.8]{GLS}, which enter
through Lemmas~\ref{lem:gap} and~\ref{lem:coincidence}, and, for
$|D_U|<(q+3)/2$ in Theorem~\ref{thm:stability}, the linearity theorem
\cite{BBBS,Ball}. Otherwise the arguments use only polynomial
divisibility, ordinary derivatives, linear algebra, and counting.

\section{A common polynomial framework}\label{sec:gap}
The prime-field case of Theorem~\ref{thm:main} follows from
Lov\'asz and Schrijver \cite{LS}. Throughout Sections~\ref{sec:gap}--\ref{sec:three-reflections}, assume
$n\ge2$ and work in the range
\begin{equation}\label{eq:range}
\frac{q+3}{2}\le |D_U|\le\frac{q+p}{2}.
\end{equation}
Put
\begin{equation}\label{eq:notation}
M=\frac{q+1}{2},\qquad m=M-1=\frac{q-1}{2},\qquad K=\frac{q-1}{p-1},
\end{equation}
and define $\delta$ by $|D_U|=M+1+\delta$.
Thus $\delta=0$ when $|D_U|=(q+3)/2$; otherwise $p>3$ and
$0<\delta\le(p-3)/2$. Cardinalities and inequalities are interpreted
in the integers, while coefficients and derivatives are taken in
$\F$. In particular, since $q=0$ in $\F$, the integers $M$ and $m$, viewed as
elements of $\F$, are $M=1/2$ and $m=-1/2$. For a polynomial $g$,
we write $[X^k]g$ for the coefficient of $X^k$ in $g$, and use the
same notation for any other variable. Subscripts $X$ and $Y$ denote
partial derivatives, for example $R_X=\partial R/\partial X$, and a
prime denotes the derivative of a polynomial in one variable. For a
nonzero polynomial $g$ and $u\in\F$, $\ord_u g$ denotes the
multiplicity of $u$ as a root of $g$, so $\ord_u g=0$ if $g(u)\ne0$.

The \emph{intersection number} of an affine line $\ell$ with $U$ is
$|U\cap\ell|$. A \emph{tangent} to $U$ has intersection number one,
and a $k$-\emph{secant} has intersection number $k\ge2$.
For a fixed determined direction, the points of tangency are called
\emph{singleton points}. A \emph{singleton direction} at $P\in U$
is a determined direction whose line through $P$ is tangent to $U$.

For a finite set $S$ of points with $p\nmid|S|$, its \emph{centroid}
is the point
\[
\frac1{|S|}\sum_{P\in S}P\in\F^2.
\]
By Lemma~\ref{lem:special} below, $p\nmid|U\cap\ell|$ for every line
$\ell$ in a determined direction meeting $U$, and lines whose
directions are not determined by $U$ meet $U$ in one point. So every nonempty intersection
$U\cap\ell$ has a centroid.

Since $|D_U|<q+1$, there are affine coordinates $(x,z)$ in which the
vertical direction is not determined by $U$; we call them \emph{graph coordinates}
for $U$. In such coordinates $U=\{(x,f(x)):x\in\F\}$ for a function
$f:\F\to\F$. We identify $f$ with its unique polynomial representative
of degree at most $q-1$, the \emph{reduced graph polynomial} of $U$ in
these coordinates, and set
\begin{equation}\label{eq:redei}
R(X,Y)=\prod_{x\in\F}(X+xY+f(x))
       =X^q+\sum_{i=1}^q h_i(Y)X^{q-i}.
\end{equation}
For $y\in\F$, the lines $yx+z=-t$, $t\in\F$, form the parallel class
of slope $-y$, and the multiplicity of $t$ as a root of $R(X,y)$ is the
number of points of $U$ on the line $yx+z=-t$.
Since the vertical direction is not determined by $U$, we have
$D_U\subseteq\F$ under the identification of directions with slopes;
thus $-y\in D_U$ if and only if $f(X)+yX$ is not a permutation
polynomial of $\F$. Put
\[
T=\{y\in\F:-y\notin D_U\}.
\]
Then $|T|=q-|D_U|$, and for $y\in T$ every line $yx+z=-t$ contains
exactly one point of $U$, so the values $yx+f(x)$ run through $\F$.
Hence
\begin{equation}\label{eq:T}
|T|=m-1-\delta,\qquad R(X,y)=X^q-X\quad(y\in T).
\end{equation}

\Needspace{12\baselineskip}
\begin{lemma}[{\cite[Lemma~3.3, Corollary~3.7(i), Theorem~3.8]{GLS}}]\label{lem:gap}
In every choice of graph coordinates, the reduced graph polynomial has
degree at most $|D_U|-1$. After a possible affine change of graph
coordinates when $\delta>0$,
\begin{equation}\label{eq:gap}
R=X^q+a(Y)X^M+a_1(Y)X^{M-1}
  +\text{terms of $X$-degree at most $M-2$},
\end{equation}
where $\deg a=m-1$, $\deg a_1\le m$, and both $a$ and $a_1$ vanish
on $T$.
When $|D_U|=(q+3)/2$, the form \eqref{eq:gap} holds with $\deg a=m-1$ in every
choice of graph coordinates, and
\begin{equation}\label{eq:aE}
a(Y)=\lambda\prod_{y\in T}(Y-y),\qquad\lambda\in\F^*.
\end{equation}
\end{lemma}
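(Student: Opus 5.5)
This lemma is quoted from G\'acs--Lov\'asz--Sz\H onyi, so the plan is to reconstruct how their statements give each clause in the paper's notation.

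\textbf{Degree bound.} For $y\in T$, \eqref{eq:T} gives $R(X,y)=X^q-X$. Hence $h_i(y)=0$ for $1\le i\le q-2$ and $y\in T$. Each $h_i$ has degree at most $i$, and $h_i$ is, up to sign, the $i$-th elementary symmetric polynomial of the $xY+f(x)$.

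Expanding by Newton's identities, the power sums $\sum_x (xY+f(x))^k$ vanish on $T$ for $k<q-1$. The coefficient of $Y^{k-j}$ in the $k$-th power sum is $\binom kj\sum_x x^{k-j}f(x)^j$. Consider $k$ with $k\ge |D_U|$ and fewer than $|T|+1$ nonvanishing terms. Then a polynomial of degree less than $|T|$ vanishing on $T$ is zero, which forces $\sum_x x^{k-1}f(x)=0$ for $k$ in the range from $|D_U|$ to $q-1$. This is the classical R\'edei argument, and it gives $\deg f\le |D_U|-1=|T'|$, where $|T'|=q-|T|$. I would cite this as \cite[Lemma~3.3]{GLS}.

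\textbf{Lacunary form \eqref{eq:gap}.} The idea is to write $R(X,Y)=X^q+\dots$ and note that for each $y\in T$ the polynomial $R(X,y)-(X^q-X)$ vanishes identically. Writing $R(X,Y)=X^q-X+\sum_i h_i(Y)X^{q-i}$, every $h_i$ with $i<q-1$ is divisible by $\prod_{y\in T}(Y-y)$, which has degree $|T|=m-1-\delta$.

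Since $\deg h_i\le i$, this gives $h_i=0$ for $i<|T|$. That only kills roughly the top $q/2-\delta$ coefficients. To reach $X$-degree $M$, i.e.\ $i=q-M=m$, one needs $h_i=0$ for $i<m$, which beats the trivial bound by about $\delta+1$ steps.

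For $\delta=0$ the trivial bound gives exactly $h_i=0$ for $i<m-1$. The coefficient $h_{m-1}$ is a constant multiple of $\prod_T(Y-y)$, so it has the form $\lambda\prod_T(Y-y)$. Hence $a=h_{m-1}$ once the indexing is matched, with $X$-degree $q-(m-1)=M+1$.

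This indexing needs care. With $a$ of degree $m-1$, the term $aX^M$ corresponds to $i=q-M=m$, and $\deg h_m\le m$ permits degree $m-1$ after divisibility. So I would check that the trivial bound plus the sharper coefficient estimates of \cite[Corollary~3.7(i)]{GLS} give $h_i\equiv0$ for $|T|\le i<m$. These estimates come from comparing with the $p$-th power structure, using the fact that the intersection numbers at determined directions satisfy $s=1$. That is exactly what \cite{GLS} proves via $h_i(Y)$ being determined by $f$'s low-degree coefficients together with $\deg f\le |D_U|-1$.

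\textbf{Nonvanishing of $a$ and coordinate change.} The claim $\deg a=m-1$, i.e.\ $a\ne0$, would follow from $U$ not being $\Fp$-linear, since $|D_U|>(q-1)/(p-1)$; otherwise $R$ would be too lacunary. When $\delta>0$, the affine change of coordinates is the normalization in \cite[Theorem~3.8]{GLS}, which uses a translation $X\mapsto X+c(Y)$ with $c$ linear to kill the intermediate coefficients $h_i$, $|T|\le i<m$.

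\textbf{Main obstacle.} The hard part is that middle range of indices, which requires the GLS coefficient machinery rather than elementary divisibility. In this proposal I would quote it rather than reprove it.
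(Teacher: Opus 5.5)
The key idea you are missing is that $\deg h_i\le i-1$ for $1\le i\le q-2$, not merely $\deg h_i\le i$. The reason is that $[Y^i]h_i=e_i(\F)$, and this vanishes because $\prod_{z\in\F}(X+z)=X^q-X$. The weaker bound is the source of your off-by-one. Since $h_i$ also vanishes on $T$, it follows that $h_i\equiv0$ for $1\le i\le|T|=m-1-\delta$.

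For $\delta=0$ this already kills every $h_i$ with $i\le m-1$; in particular your $h_{m-1}$, at $X^{M+1}$, is identically zero. So there is no middle range. The polynomial $a=h_m$ has degree at most $m-1$ and vanishes at the $m-1$ points of $T$, which gives \eqref{eq:aE}. The only external input is $h_m\not\equiv0$, i.e.\ $I_1(f)\le m$, where $I_1(f)$ is the least $i\ge1$ with $h_i\not\equiv0$. The paper takes this from \cite[Lemma~3.3(i)]{GLS} using $|D_U|>K$. The same sharper bound gives $\deg a_1=\deg h_{m+1}\le m$, a clause your proposal never establishes. The middle range $|T|<i<m$ occurs only when $\delta>0$, and there it is \cite[Theorem~3.8]{GLS}, not Corollary~3.7(i), that supplies coordinates with $I_1(f)=m$.

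Second, for $\delta>0$ you write ``$\deg a=m-1$, i.e.\ $a\ne0$'', but these are not equivalent. Here $a=h_m$ is only forced to vanish on the $m-1-\delta$ points of $T$, so $a\ne0$ yields only $\deg a\ge m-1-\delta$. The paper gets the exact degree from the identity $[Y^{i-1}]h_i=\sum_x f(x)(-x)^{i-1}=(-1)^if_{q-i}$, which follows from $e_{i-1}(\F\setminus\{x\})=(-x)^{i-1}$. At $i=m$ this gives $(-1)^mf_M$. It is nonzero because \cite[Theorem~3.8]{GLS} also gives $\deg f=M$ in those coordinates.

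This identity also repairs your degree-bound paragraph. The vanishing of $h_i$ for $i\le|T|$ gives $f_j=0$ for all $j\ge|D_U|$ at once. Your power-sum route has two problems: the range of $k$ is reversed (it should be $1\le k\le|T|$), and it loses $f_{q-k}$ whenever $p\mid k$, because the relevant coefficient carries the factor $\binom k1=k$.

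Finally, the normalization in Theorem~3.8 cannot be a substitution $X\mapsto X+c(Y)$ with $c$ affine. That substitution is a translation of $U$, and it leaves $h_{I_1}$, hence $I_1(f)$, unchanged; the proof of Lemma~\ref{lem:centroid} uses exactly this invariance. The same holds for every affine map fixing the vertical direction. So the coordinate change must move a different undetermined direction to $\infty$.
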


\begin{proof}
Our $R$ is the R\'edei polynomial of \cite[Section~3]{GLS}, and
$I_1(f)$ denotes the least index $i\ge1$ with $h_i\not\equiv0$.
In every graph coordinate system, \cite[Lemma~3.3(ii)]{GLS} gives
$I_1(f)\ge q+1-|D_U|=m-\delta$, and \cite[Corollary~3.7(i)]{GLS}
gives $\deg f\le q-I_1(f)\le|D_U|-1$. Moreover, $|D_U|\ge m+2>K$, so
\cite[Lemma~3.3(i)]{GLS} gives $I_1(f)\le m$.

Next, $\deg h_i\le i-1$ and $h_i|_T=0$ for $1\le i\le q-2$:
the coefficient of $Y^i$ in $h_i$ is the $i$th elementary symmetric
function of $\F$, which vanishes, and $R(X,y)=X^q-X$ for $y\in T$.

Suppose first that $\delta=0$. Then $I_1(f)=m$ in every graph
coordinate system, so \eqref{eq:gap} holds with $a=h_m\ne0$. Since
$\deg h_m\le m-1$ and $h_m$ vanishes at the $m-1$ elements of $T$,
this gives \eqref{eq:aE} and $\deg a=m-1$.

For $\delta>0$, \cite[Theorem~3.8]{GLS} gives graph coordinates with
$I_1(f)=m$ and $\deg f=M$, so \eqref{eq:gap} holds with $a=h_m\ne0$.
Write $f(X)=\sum_{j=0}^{q-1}f_jX^j$. Expanding the product defining
$R$, and using $e_{m-1}(\F\setminus\{x\})=(-x)^{m-1}$ for the
elementary symmetric function $e_{m-1}$, we obtain
\[
[Y^{m-1}]h_m(Y)=\sum_{x\in\F}f(x)(-x)^{m-1}=(-1)^mf_M\ne0,
\]
so $\deg a=m-1$. In both cases $a=h_m$ and $a_1=h_{m+1}$, so the
bounds above give $\deg a_1\le m$ and show that $a$ and $a_1$ vanish
on $T$.
\end{proof}

From now on we fix graph coordinates in which \eqref{eq:gap} holds
with $\deg a=m-1$: for $\delta=0$ any choice will do, and for
$\delta>0$ we take those supplied by \cite[Theorem~3.8]{GLS}.

\begin{lemma}[a consequence of {\cite[Lemma~3.3(ii), Corollary~3.7(i)]{GLS}}]\label{lem:coincidence}
Let $U,V$ be $q$-sets, each determining at least two directions, such
that some direction is determined by neither of them. If $|U\cap V|\ge\max\{|D_U|,|D_V|\}$,
then $U=V$.
\end{lemma}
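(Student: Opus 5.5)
We will argue by contradiction. Suppose $U\ne V$ and fix graph coordinates in which the common undetermined direction is vertical. Write $U$ and $V$ as graphs of functions $f$ and $g$, identified with their reduced polynomials of degree at most $q-1$. Put $N=\max\{|D_U|,|D_V|\}$. Since a point of $U\cap V$ lies on the common vertical line of both graphs, $|U\cap V|$ is exactly the number of $x\in\F$ with $f(x)=g(x)$. So $f-g$ is a nonzero polynomial of degree at most $q-1$ with at least $N$ roots. The goal is to bound $\deg(f-g)$ by $N-1$, which gives the contradiction.

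The degree bound will come from \cite[Lemma~3.3(ii), Corollary~3.7(i)]{GLS}, exactly as in the first paragraph of the proof of Lemma~\ref{lem:gap}. For $U$, \cite[Lemma~3.3(ii)]{GLS} gives $I_1(f)\ge q+1-|D_U|$, and \cite[Corollary~3.7(i)]{GLS} then gives $\deg f\le q-I_1(f)\le|D_U|-1\le N-1$. The same argument gives $\deg g\le|D_V|-1\le N-1$. Hence $\deg(f-g)\le N-1<N\le|U\cap V|$, so $f-g$ has more roots than its degree and $f=g$, that is, $U=V$.

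The main point to check is the hypotheses under which the two cited GLS statements apply. The condition that each set determines at least two directions rules out lines: a line as a graph is $f(x)=cx+e$, with $\deg f\le1$, and only then is $I_1$ possibly undefined, since the Rédei polynomial is then a power of a linear form and $h_i\equiv0$ for all $1\le i\le q-2$. Corollary~3.7(i) of \cite{GLS} presumably needs $I_1(f)$ to be defined, or equivalently $f$ not affine. With at least two directions, $f$ is not affine, so $I_1(f)$ exists and the chain above is valid.

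If Corollary~3.7(i) is stated under a hypothesis such as $|D_U|\le (q+1)/2$ or $I_1\le$ something, I would instead use the direct fact from Rédei theory: $\deg f\le q-I_1(f)$ whenever $f$ is non-affine. This follows by comparing the coefficient of $Y^{i}$ in $h_i$ with power sums of $f$. Note also that the bound $N\le q$ is automatic, since some direction is undetermined. So the only real work is confirming these citations, and the rest is root counting.
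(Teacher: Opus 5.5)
Your proof is correct and follows the paper's argument: make a commonly undetermined direction vertical, bound the degrees of the two reduced graph polynomials by $|D_U|-1$ and $|D_V|-1$ via the cited GLS results, and observe that their difference vanishes at the $|U\cap V|$ distinct $x$-coordinates of common points. One correction to your side remark: $I_1(f)$ is never undefined, because $h_{q-1}$ has $Y^{q-1}$-coefficient $\prod_{\xi\in\F^*}\xi=-1$. The real role of the two-direction hypothesis is this: for a line one has $I_1(f)=q-1<q=q+1-|D_U|$, so the cited lower bound on $I_1$ fails, and for a nonhorizontal line so does the bound $\deg f\le|D_U|-1$.
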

\begin{proof}
Make such a direction vertical. By the cited results, as in the proof
of Lemma~\ref{lem:gap}, the reduced graph polynomials of $U$ and $V$
have degrees at most $|D_U|-1$ and $|D_V|-1$, respectively. Their
difference vanishes at the $x$-coordinates of the points of $U\cap V$,
which are distinct. It therefore has more roots than its degree, and
hence is zero.
\end{proof}

\begin{corollary}\label{cor:coincidence}
If two $q$-sets each determine $(q+3)/2$ directions and have more than
$M=(q+1)/2$ common points, then they are equal.
\end{corollary}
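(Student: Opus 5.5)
The corollary should follow directly from Lemma~\ref{lem:coincidence}. Let $U,V$ be two $q$-sets with $|D_U|=|D_V|=(q+3)/2$ and $|U\cap V|>M=(q+1)/2$. Since $|U\cap V|$ is an integer, this means $|U\cap V|\ge M+1=(q+3)/2=\max\{|D_U|,|D_V|\}$, which is exactly the intersection hypothesis of Lemma~\ref{lem:coincidence}.

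It remains to check the other two hypotheses of the lemma. First, each set determines $(q+3)/2\ge 3>2$ directions, so each determines at least two. Second, we need a direction determined by neither set. We have
\[
|D_U\cup D_V|\le |D_U|+|D_V| = q+3,
\]
and this exceeds $q+1$, so a pure union bound is not enough. This is the one step that needs an idea.

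To find a common undetermined direction, I would use the common points. Every direction determined by two points of $U\cap V$ lies in $D_U\cap D_V$. The set $U\cap V$ is a set of $k\ge (q+3)/2$ points, and by Lemma~\ref{lem:special} it is not contained in a line: every line in a determined direction meets $U$ in a number of points prime to $p$, and lines in undetermined directions meet $U$ in one point, so no line contains $q$ points of $U$. Hence $U\cap V$ determines at least two directions. Then $|D_U\cap D_V|\ge 2$, which gives
\[
|D_U\cup D_V|\le q+3-2=q+1.
\]
This only reaches equality, so it is still not sufficient on its own.

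I would strengthen this with the classical bound that a set of $k$ points in $\AG(2,q)$, not all collinear, with $k\le q$ determines at least $(k+3)/2$ directions (Sz\H onyi's extension of R\'edei's bound). Applied to $U\cap V$, this gives $|D_U\cap D_V|\ge 3$ as soon as $k\ge 3$. Therefore $|D_U\cup D_V|\le q$, and some direction of $\ell_\infty$ is determined by neither $U$ nor $V$. Lemma~\ref{lem:coincidence} then gives $U=V$.

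If citing that external bound is undesirable, I would use an elementary argument instead. Fix a point $P\in U\cap V$. The other $k-1\ge(q+1)/2$ common points are joined to $P$ by lines in common determined directions. If only two common directions occurred, then all of $U\cap V$ would lie on two lines through $P$. Adding a third point of $U\cap V$ off both of those lines produces a third common direction. Alternatively, one can vary $P$ to show that at least three directions occur among the lines joining pairs of points of $U\cap V$.

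The main obstacle is therefore this existence of a common undetermined direction; the rest of the argument is immediate.
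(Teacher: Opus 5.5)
Your overall route matches the paper's. You use $|U\cap V|\ge M+1=\max\{|D_U|,|D_V|\}$ to apply Lemma~\ref{lem:coincidence}, and you get a common undetermined direction from $|D_U\cap D_V|\ge3$, which follows from $U\cap V$ being non-collinear. The gap is in that non-collinearity, which you treat as routine. From Lemma~\ref{lem:special} you only derive that no line contains $q$ points of $U$, i.e.\ that $U$ itself is not collinear. That says nothing about $U\cap V$, which can have as few as $(q+3)/2$ points. The information $p\nmid|U\cap\ell|$ does not rule out a line carrying $(q+3)/2$ or more points of $U$.

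What you need is the secant bound $|U\cap\ell|\le M$ for every line $\ell$. The paper proves it by counting: if $\ell$ contains $k\ge2$ points of $U$ and $P\in U\setminus\ell$, the $k$ lines joining $P$ to these points have pairwise distinct directions, all different from that of $\ell$. Hence $k+1\le|D_U|=M+1$. The part of Lemma~\ref{lem:special} you did not use, that exactly $M$ lines in each determined direction meet $U$, would also give $|U\cap\ell|\le q-(M-1)=M$. Since $|U\cap V|\ge M+1$, the set $U\cap V$ is then not collinear.

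Once non-collinearity is established, no Sz\H onyi-type bound is needed. Three non-collinear points $A,B,C\in U\cap V$ already give the three distinct directions of $AB$, $AC$, $BC$, all in $D_U\cap D_V$, so $|D_U\cup D_V|\le q$. This matters because the bound you quote is false for $q=p^n$ with $n\ge2$: the $(k+3)/2$ bound is Sz\H onyi's theorem over prime fields. For instance, the graph of $X^p$ is a non-collinear $q$-set determining only $(q-1)/(p-1)<(q+3)/2$ directions.

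Your ``elementary alternative'' is also garbled as written. If $U\cap V$ lies on two lines through $P$, there is no point of $U\cap V$ off both of them. The third direction comes instead from joining a point of $U\cap V$ on one line to a point of $U\cap V$ on the other.
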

\begin{proof}
Let the sets be $U,V$. Since $|D_U|=M+1>1$, the set $U$ is not
collinear. If a line $\ell$ contains $k\ge2$ points of $U$, choose
$P\in U\setminus\ell$. The lines joining $P$ to these $k$ points
have distinct directions, all different from the direction of $\ell$.
Thus $k+1\le |D_U|=M+1$, so no line contains more than $M$ points of $U$.
It follows that $U\cap V$ is not collinear, so
$|D_U\cap D_V|\ge3$. Hence
\[
|D_U\cup D_V|=|D_U|+|D_V|-|D_U\cap D_V|\le q,
\]
so some direction is determined by neither set.
Since $|U\cap V|\ge M+1=|D_U|=|D_V|$, Lemma~\ref{lem:coincidence}
gives $U=V$.
\end{proof}

\begin{remark*}
The threshold in Corollary~\ref{cor:coincidence} is sharp. Let $H_2$ be
the set of nonzero squares in $\F$, and let $\nu$ be a nonsquare. Put
\[
\begin{aligned}
U&=\{(0,0)\}\cup(H_2\times\{0\})\cup(\{0\}\times H_2),\\
V&=\{(0,0)\}\cup(H_2\times\{0\})\cup(\{0\}\times\nu H_2).
\end{aligned}
\]
These are distinct $q$-sets. Their determined directions are the two
axis directions together with the slopes in $-H_2$ and $-\nu H_2$,
respectively. Thus each determines $(q+3)/2$ directions, whereas
$U\cap V=\{(0,0)\}\cup(H_2\times\{0\})$ has exactly $M$ points.
\end{remark*}

\Needspace{6\baselineskip}
\begin{lemma}\label{lem:nonexceptional}
For every $y\in\F$ with $-y\in D_U$, one has $a(y)\ne0$.
\end{lemma}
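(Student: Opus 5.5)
The plan is to specialise the lacunary form \eqref{eq:gap} to a determined slope and apply R\'edei's multiplicity argument to the resulting one-variable polynomial. When $\delta=0$ there is nothing to do: by \eqref{eq:aE}, $a=\lambda\prod_{y\in T}(Y-y)$ with $\lambda\ne0$, so the zeros of $a$ are exactly the elements of $T$, none of which corresponds to a determined direction. So assume $\delta>0$, or better, give an argument that does not use \eqref{eq:aE} at all.

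Fix $y\in\F$ with $-y\in D_U$ and suppose, for a contradiction, that $a(y)=0$. Put $F(X)=R(X,y)$. By \eqref{eq:gap},
\[
F=X^q+g(X),\qquad \deg g\le M-1=m .
\]
Moreover $F=\prod_{t\in\F}(X+t)^{k_t}$, where $k_t$ is the number of points of $U$ on the line $yx+z=-t$, so $F$ splits over $\F$.

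\emph{R\'edei's divisibility.} Each root of $F$ is a simple root of $X^q-X$, and it is a root of $F'$ of multiplicity at least $k_t-1$. Hence $F$ divides $(X^q-X)F'$. Since $F'=g'$ and $X^q-X=F-g-X$, this gives
\[
F\mid (g+X)\,g' .
\]
The right-hand side has degree at most $2m-1=q-2<q=\deg F$, so it is the zero polynomial. Two cases follow.
\begin{enumerate}
\item If $g=-X$, then $F=X^q-X$. Every line of slope $-y$ then meets $U$ in exactly one point, so $-y\notin D_U$, contradicting the choice of $y$.
\item If $g'=0$, then $F'=0$. Every nonempty intersection of $U$ with a line of direction $-y$ then has size divisible by $p$, and
\[
F=\bigl(X^{q/p}+h(X)\bigr)^p,\qquad \deg h\le m/p .
\]
\end{enumerate}

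\emph{Excluding $g'=0$.} I expect this to be the main obstacle; I would try two routes. The first is a counting argument. Fix a point $P\in U$ and project the remaining points of $U$ from $P$. Every line through $P$ in direction $-y$ contains at least $p$ points, and each other point of $U$ contributes a direction. Compared with the bound $|D_U|\le(q+p)/2$ this may be too weak on its own. The second, more promising route is to pass to a different determined direction $d$ and examine the parallel lines of $d$ through the $q/\gcd$ collinear $p$-tuples. Alternatively, apply the R\'edei argument directly to $X^{q/p}+h$. This polynomial splits with all its roots simple, since $F$ has exactly $q/p\cdot$(average) roots, so $X^{q/p}+h$ divides $X^q-X$. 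Then $X^{q/p}+h$ divides $X^q-X$ together with $X^q+h^p$ (the latter because $X^q\equiv -h^p$ modulo it), so it divides $h^p+X$. This has degree $\le pm/p=m<q$, and is nonzero as soon as $\deg h^p\ge 2$ or unless $h=-X^{1/p}$ (which is impossible). Comparing degrees, $q/p\le\max(1,p\deg h)$, which should give a contradiction whenever $n\ge2$ once the lower-order coefficients are examined. If this degree count fails, the fallback is to invoke \cite[Lemma~3.3]{GLS}. For a determined direction whose intersection numbers are all divisible by $p$, that lemma forces $I_1\le K$ in suitable coordinates, and this contradicts the bound $|D_U|>K$ used in the proof of Lemma~\ref{lem:gap}.
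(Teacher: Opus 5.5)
Your handling of $\delta=0$ via \eqref{eq:aE} is correct, and so is your reduction $F\mid(g+X)g'$, which forces $g=-X$ or $g'=0$; both match the first step of the paper's proof. The gap is the case $g'=0$. This case is the entire content of the lemma when $\delta>0$, and none of your routes closes it.

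The route you develop fails twice.
\begin{itemize}
\item $G=X^{q/p}+h$ splits, but its root multiplicities are $k_t/p$, which need not equal $1$. So $G\mid X^q-X$ is unjustified.
\item Even granting that divisibility, you only obtain $q/p\le\deg(h^p+X)\le m$. This inequality holds, so it gives no contradiction.
\end{itemize}
More fundamentally, no argument confined to the single polynomial $R(X,y)$ can succeed, because the situation is locally realizable. Take the $\Fp$-linear graph of $x^p$ over $\mathbb F_{p^2}$, with $-y$ a nonzero $(p-1)$th power. Then $R(X,y)=(X^p-\alpha X)^p=X^{q}-\alpha^pX^p$, which splits, has all root multiplicities equal to $p$, and has $\deg g=p\le m$.

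Your fallback attributes to \cite[Lemma~3.3]{GLS} a conclusion that the lemma does not supply as it is used in Lemma~\ref{lem:gap}. There it only relates $I_1(f)$ to $|D_U|$, and you give no derivation of the stronger claim.

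The missing idea is to use how $R$ varies with $Y$ near $y$, together with the global facts $\deg a=m-1$, $a|_T=0$, $|T|=m-1-\delta$ and $y\notin T$.
\begin{itemize}
\item Since $F'=0$, the distinct roots $t_i$ of $F$ number $k\le q/p$ and have multiplicities $w_i\ge p$.
\item For $1\le j\le\delta$, put $R_j=\frac1{j!}\frac{\partial^jR}{\partial Y^j}(X,y)$. By the product rule, $R_j$ is divisible by $\prod_i(X-t_i)^{w_i-j}$.
\item That product has degree $q-jk\ge q-jq/p\ge q(p+3)/(2p)>M$, whereas \eqref{eq:gap} gives $\deg R_j\le M$. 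Hence $R_j=0$.
\item The $X^M$-coefficient of $R_j$ is $a^{(j)}(y)/j!$, so $a^{(j)}(y)=0$.
\item Together with $a(y)=0$, this yields $(Y-y)^{\delta+1}\mid a$. Since $a$ also vanishes on $T$ and $y\notin T$, we get $\deg a\ge|T|+\delta+1=m$, contradicting $\deg a=m-1$.
\end{itemize}
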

\begin{proof}
For $\delta=0$, this follows from \eqref{eq:aE}. Assume $\delta>0$.
Suppose $a(y)=0$ for some $y$ with $-y\in D_U$, and write
$P=R(X,y)=X^q+g$, where $\deg g\le m$. Since $-y\in D_U$,
$g+X\ne0$. All distinct roots of $P$ are roots of $g+X$, so their
number $k$ is at most $m$. If $P'\ne0$, then
$q-k\le\deg\gcd(P,P')\le m-1$, a contradiction. Hence $P'=0$, so the distinct roots
$t_1,\ldots,t_k$ of $P$ have multiplicities $w_i$ divisible
by $p$, and $k\le q/p$.
For $1\le j\le\delta<p$, put
\[
R_j(X)=\frac1{j!}\frac{\partial^jR}{\partial Y^j}(X,y).
\]
Since $j<p$, $j!\ne0$ in $\F$. By \eqref{eq:gap}, $\deg R_j\le M$.
By the product rule, differentiating $j$ of the linear factors removes
at most $j$ factors vanishing at any $t_i$. Thus
\[
\prod_{i=1}^k(X-t_i)^{w_i-j}\mid R_j.
\]
Here $j<p\le w_i$. If $R_j\ne0$, then
\[
\deg R_j\ge q-jk\ge q-\frac{jq}{p}
 \ge\frac{q(p+3)}{2p}>M,
\]
a contradiction. Therefore $R_j=0$, and its $X^M$ coefficient gives
$a^{(j)}(y)/j!=0$. Write $a(Y)=\sum_{i\ge0}b_i(Y-y)^i$. Then
$b_0=a(y)=0$, and $b_j=a^{(j)}(y)/j!=0$ for $1\le j\le\delta$.
Hence $(Y-y)^{\delta+1}$ divides $a$. By Lemma~\ref{lem:gap}, $a$
also vanishes on $T$, which does not contain $y$, so
$\deg a\ge |T|+\delta+1=m$, impossible.
\end{proof}

\begin{lemma}\label{lem:quotient}
The polynomial $a$ divides $a_1$. Consequently $a_1/a$ is affine in $Y$.
\end{lemma}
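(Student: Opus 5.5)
The plan is to show that $a_1$ vanishes at every root of $a$ to at least the same multiplicity. Once $a\mid a_1$ is established, the second assertion is immediate: $\deg a=m-1$ and $\deg a_1\le m$ by Lemma~\ref{lem:gap}, so $a_1/a$ has degree at most one.

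\emph{The case $\delta=0$.} By \eqref{eq:aE}, $a=\lambda\prod_{y\in T}(Y-y)$ is squarefree with root set exactly $T$. Lemma~\ref{lem:gap} says $a_1$ vanishes on $T$. Hence $a\mid a_1$, and nothing more is needed.

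\emph{The case $\delta>0$: locating the roots of $a$.} By Lemma~\ref{lem:nonexceptional}, $a(y)\ne0$ whenever $-y\in D_U$, so every root of $a$ lies in $T$. Since $\deg a=m-1=|T|+\delta$, the multiplicities $e_y=\ord_y a$ for $y\in T$ satisfy $\sum_{y\in T}e_y=|T|+\delta$. In particular each $e_y\le\delta+1<p$, because $\delta\le(p-3)/2$. It therefore suffices to prove $\ord_y a_1\ge e_y$ for each $y\in T$. Since $e_y<p$, I can detect multiplicities with ordinary derivatives divided by $j!$ for $j<e_y$, exactly as in the proof of Lemma~\ref{lem:nonexceptional}.

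\emph{The local expansion at $y\in T$.} Fix $y\in T$ and write $\varepsilon=Y-y$. Let $x\mapsto t=xy+f(x)$ be the bijection of $\F$ given by $y\in T$, and let $g(t)$ be the corresponding $x$. Then
\[
R(X,y+\varepsilon)=\prod_{t\in\F}\bigl(X+t+\varepsilon g(t)\bigr)=(X^q-X)\prod_{t\in\F}\Bigl(1+\frac{\varepsilon g(t)}{X+t}\Bigr).
\]
I would compare the coefficients of $X^M$ and $X^{M-1}$, order by order in $\varepsilon$, through power sums rather than elementary symmetric functions.

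Concretely, over $\F(Y)$ set $\pi_k(Y)=\sum_{x\in\F}(xY+f(x))^k$. The gap in \eqref{eq:gap} gives $h_i=0$ for $1\le i<m$. Newton's identities then give $\pi_k=0$ for $1\le k<m$, together with $a=h_m=c\,\pi_m$ and $a_1=h_{m+1}=c'\,\pi_{m+1}$ for explicit nonzero constants $c,c'$ (here $m=-1/2$ and $m+1=1/2$ in $\F$). The goal is then a relation between the Taylor coefficients of $\pi_m$ and $\pi_{m+1}$ at $y$.

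For this I would use $\frac{d}{dY}\pi_k=k\sum_x x(xY+f(x))^{k-1}$. I would also exploit that $k=m+1=M$ is the exponent of the absolute value function, so $(xY+f(x))^M=\pm(xY+f(x))$ according to quadratic character. This should let the mixed sums $\sum_x x^j(xy+f(x))^{k-j}$ appearing in the $j$-th derivative be controlled by the vanishing of the lower power sums in the permuted variable $t$.

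\emph{Main obstacle.} The key difficulty is this last step: showing that the vanishing of $a^{(j)}(y)$ for $j<e_y$ forces the vanishing of $a_1^{(j)}(y)$ for $j<e_y$. If the power-sum bookkeeping does not close directly, I would fall back on the degree argument of Lemma~\ref{lem:nonexceptional}. Let $R_j$ denote the $\varepsilon^j$-coefficient above. For $j<e_y$ it has $X$-degree at most $M-1$, and it equals $(X^q-X)$ times a Laurent series in $X^{-1}$ whose low-order coefficients are power sums $\sum_t g(t)^i t^k$. I would then try to show, by induction on $j$, that the $X^{M-1}$ coefficient of $R_j$ also vanishes, playing the degree bound against the roots of $X^q-X$.
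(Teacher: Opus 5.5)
Your $\delta=0$ case and the final degree count are fine. The case $\delta>0$, however, has a genuine gap in two places.

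\textbf{First gap: the reduction to points of $T$.} Lemma~\ref{lem:nonexceptional} says $a(y)\ne0$ only for $y\in\F$ with $-y\in D_U$. It locates the $\F$-rational roots of $a$, but says nothing about roots of the degree-$\delta$ cofactor $a/\prod_{y\in T}(Y-y)$ in $\overline{\F}\setminus\F$. For $\delta=1$ this is harmless, since the cofactor is then linear over $\F$. For $\delta\ge2$, however, the cofactor may a priori have an irreducible factor of degree at least two, so $\sum_{y\in T}e_y=|T|+\delta$ is unproved. Your local expansion needs $x\mapsto xy+f(x)$ to be a bijection of $\F$, and it has no analogue at such a root. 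So even a complete argument at the points of $T$ would not give $a\mid a_1$.

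\textbf{Second gap: the decisive step is missing.} Up to nonzero constants, $a^{(j)}(y)$ and $a_1^{(j)}(y)$ are $\sum_t g(t)^jt^{m-j}$ and $\sum_t g(t)^jt^{M-j}$. You need a mechanism by which the vanishing of the former for $j<e_y$ forces the vanishing of the latter, and none is given. The quadratic-character remark only turns the latter into a twisted character sum. The fallback also cannot work as stated. In Lemma~\ref{lem:nonexceptional} the divisibility $\prod_i(X-t_i)^{w_i-j}\mid R_j$ came from multiplicities $w_i\ge p$. For $y\in T$, by contrast, $R(X,y)=X^q-X$ has only simple roots, so $R_j$ inherits no roots at all.

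\textbf{What the paper uses instead.} The missing idea is a global relation between $a$ and $a_1$. The paper proves
\[
\bigl(R-X^q+X\bigr)R_X-(Y^q-Y)R_Y=\tfrac{a(Y)^2}{2}\,R
\]
by checking it modulo each factor $X+xY+f(x)$, where $X^q-X\equiv-x(Y^q-Y)$ and $R_Y\equiv xR_X$. Write $R-X^q+X=\sum_ja_jX^{M-j}$ and compare coefficients of $X^{q-j}$ for $2\le j\le\kappa=(p+1)/2$. This gives the recurrence $2aa_j=-\sum_{i=1}^{j-1}a_ia_{j-i}$, whose solution is $a_j=2^j\binom{\kappa}{j}a_1^j/a^{j-1}$. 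At $j=\kappa$ this yields $a^{\kappa-1}\mid a_1^\kappa$. Every irreducible factor of $a$ has multiplicity $\eta\le\delta+1<\kappa$ in $a$. If it has multiplicity $\eta_1$ in $a_1\ne0$, then $\kappa\eta_1\ge(\kappa-1)\eta$ forces $\eta_1\ge\eta$. This handles all irreducible factors at once, without needing to know where the roots of $a$ lie.
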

\begin{proof}
When $|D_U|=(q+3)/2$, both $a$ and $a_1$ vanish on $T$ by
Lemma~\ref{lem:gap}, and
\eqref{eq:aE} proves the assertion directly. Suppose $\delta>0$;
then $q\ge p^2$. First establish the bivariate identity
\begin{equation}\label{eq:bivariate}
\bigl(R(X,Y)-X^q+X\bigr)R_X(X,Y)-(Y^q-Y)R_Y(X,Y)=\frac{a(Y)^2}{2}R(X,Y).
\end{equation}
Modulo a factor $X+xY+f(x)$ of $R(X,Y)$, we have $R\equiv0$ and
$X\equiv-xY-f(x)$, so $X^q-X\equiv-x(Y^q-Y)$, as $x,f(x)\in\F$. Hence
the first parenthesis is congruent to $x(Y^q-Y)$. Also
$R_Y\equiv xR_X$, since both sides are congruent to $x$ times the
product of the other factors of $R(X,Y)$. Thus this factor divides the
left side. The $q$ factors of $R(X,Y)$ are pairwise coprime, hence
$R(X,Y)$ divides the left side. Since $q=0$ in $\F$, \eqref{eq:gap}
gives $R_X(X,Y)=Ma(Y)X^{M-1}+\cdots$, so the left side has $X$-degree
at most $2M-1=q$ and $X^q$-coefficient $Ma(Y)^2=a(Y)^2/2$. As
$R(X,Y)$ is monic in $X$, this gives \eqref{eq:bivariate}.
Put $A(X,Y)=R(X,Y)-X^q+X$ and $c(Y)=a(Y)^2/2$. Then
\begin{equation}\label{eq:bivariateA}
A(X,Y)A_X(X,Y)=\bigl(1+c(Y)\bigr)A(X,Y)+c(Y)(X^q-X)+(Y^q-Y)A_Y(X,Y).
\end{equation}
Write $A(X,Y)=\sum_{j=0}^{M}a_j(Y)X^{M-j}$, where $a_0(Y)=a(Y)$, and set
$\kappa=(p+1)/2<m$. Except for $c(Y)X^q$, the right side has
$X$-degree at most $M$. For $2\le j\le\kappa$, comparison at
$X^{q-j}$, using $2AA_X=(A^2)_X$, gives
$[X^{q+1-j}]A^2=0$, since
$q+1-j=1-j$ is nonzero. Thus the coefficients satisfy the finite
recurrence
\begin{equation}\label{eq:coefficientrecurrence}
2aa_j=-\sum_{i=1}^{j-1}a_i a_{j-i}
\qquad(2\le j\le\kappa).
\end{equation}
Put $\rho=a_1/a\in\F(Y)$ and $\alpha_j=a_j/a$. Dividing
\eqref{eq:coefficientrecurrence} by $a^2$ gives
$2\alpha_j=-\sum_{i=1}^{j-1}\alpha_i\alpha_{j-i}$ for
$2\le j\le\kappa$, with $\alpha_0=1$ and $\alpha_1=\rho$. Hence $\rho$
determines $\alpha_2,\ldots,\alpha_\kappa$ uniquely. Now let
$C(Z)=(1+2\rho Z)^\kappa=\sum_jc_jZ^j\in\F(Y)[Z]$. Then $c_0=1$ and
$c_1=2\kappa\rho=\rho$, since $2\kappa=p+1=1$ in $\F$. Moreover,
\[
C(Z)^2=(1+2\rho Z)^{p+1}
       =(1+2\rho Z)\bigl(1+(2\rho Z)^p\bigr)
       \equiv1+2\rho Z\pmod{Z^{\kappa+1}},
\]
so comparing the coefficients of $Z^j$ for $2\le j\le\kappa$ shows that
the $c_j$ satisfy the same recurrence. Therefore $\alpha_j=c_j$, that is,
\[
a_j=2^j\binom\kappa j\frac{a_1^j}{a^{j-1}}
\qquad(1\le j\le\kappa).
\]
At $j=\kappa$ this yields
$a^{\kappa-1}a_\kappa=2^\kappa a_1^\kappa$, hence
$a^{\kappa-1}\mid a_1^\kappa$.
If $a_1=0$ there is nothing to prove. Otherwise let an irreducible
factor have multiplicity $\eta$ in $a$ and $\eta_1$ in $a_1$.
Dividing $a$ by the $|T|=m-1-\delta$ distinct linear factors
corresponding to $T$ gives a quotient of degree $\delta$.
Hence $\eta\le\delta+1\le\kappa-1$. Divisibility gives
$\kappa\eta_1\ge(\kappa-1)\eta$. Since $\eta<\kappa$, integrality forces
$\eta_1\ge\eta$. Hence $a\mid a_1$, and the degree bounds in
Lemma~\ref{lem:gap} finish the proof.
\end{proof}

\section{Intersection identities and concurrent centroid lines}
\begin{lemma}\label{lem:special}
Fix $y\in\F$ such that the direction of slope $-y$ is determined, and write
\[
P(X)=R(X,y),\qquad A(X)=P(X)-X^q+X,\qquad
c=\frac{a(y)^2}{2}.
\]
Exactly $M$ lines with this direction meet $U$. The polynomial $A$
is squarefree, has the parameters of these $M$ lines as its distinct
roots, and satisfies
\begin{align}
AP'&=cP,\label{eq:AP}\\
AA'&=(1+c)A+c(X^q-X).\label{eq:AA}
\end{align}
Moreover,
\begin{equation}\label{eq:types}
c\in\{2,1/2\}.
\end{equation}
For every line $\ell$ in this direction meeting $U$, the number
$|U\cap\ell|$ is not divisible by $p$. If $w=|U\cap\ell|>1$,
then $w\equiv c\pmod p$.
\end{lemma}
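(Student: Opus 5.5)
The plan is to specialize the bivariate identity \eqref{eq:bivariate} and its variant \eqref{eq:bivariateA} at $Y=y$, and then read everything off from the factorization of $P$. The derivation of \eqref{eq:bivariate} in the proof of Lemma~\ref{lem:quotient} uses only \eqref{eq:gap} and the product form of $R$, not $\delta>0$, so it holds in our fixed coordinates for every $\delta$. Since $y^q=y$, substituting $Y=y$ kills the $(Y^q-Y)$ terms. This gives \eqref{eq:AP}, and in the same way \eqref{eq:AA}. The latter is also immediate from \eqref{eq:AP}: writing $P=X^q-X+A$ gives $P'=A'-1$, hence $A(A'-1)=c(X^q-X+A)$. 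By \eqref{eq:gap}, $\deg A\le M$ with $[X^M]A=a(y)$, and $a(y)\ne0$ by Lemma~\ref{lem:nonexceptional}. Hence $\deg A=M$ and $c\ne0$.

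Next we identify the roots. Write $P=\prod_{t\in S}(X-t)^{w_t}$, where $S$ is the set of parameters of lines of slope $-y$ meeting $U$ and $w_t$ are the intersection numbers, and put $Q=\prod_{t\in S}(X-t)$. Each $t\in S$ lies in $\F$, so $A(t)=P(t)-t^q+t=0$; thus $Q\mid A$. Dividing \eqref{eq:AP} by $P$ gives
\[
A\sum_{t\in S}w_t\frac{Q}{X-t}=cQ.
\]
Since $Q\mid A$, the polynomial $G=\sum_t w_tQ/(X-t)$ satisfies $(A/Q)\,G=c$, so $A/Q$ and $G$ are constants. Therefore $A=a(y)Q$, so $A$ is squarefree with root set exactly $S$, and $|S|=\deg A=M$. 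Evaluating $G$ at $t\in S$ gives $w_tQ'(t)=c/a(y)\ne0$, so $p\nmid w_t$ and
\[
w_t\,A'(t)=c .
\]
If $w_t>1$ then $P'(t)=0$, so $A'(t)=1$ and hence $w_t\equiv c\pmod p$. For tangents ($w_t=1$) we get $A'(t)=c$ instead.

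The main obstacle is \eqref{eq:types}, since the relations so far are consistent for any $c\ne0$. Two constraints are easy. First, $c\ne1$: otherwise $A'(t)=1$ at all $M$ roots, but $\sum_{t\in S}1/Q'(t)=0$ (Lagrange interpolation, $M\ge2$) forces $\sum_t 1/A'(t)=0$, whereas this sum equals $M=1/2\ne0$. Second, with $s$ tangent lines and $r=M-s$ secant lines, the same identity and the point count $q=s+\sum w_t$ both reduce to $r(c-1)=-1/2$ in $\F$. To pin down $c$ I would then use higher interpolation identities $\sum_{t\in S}t^k/A'(t)=0$ for $k\le M-2$, together with the factorization
\[
A'-1=\frac{a(y)}{2}\prod_{t\in S}(X-t)^{w_t-1},
\]
which follows from $P'=cP/A$ and has degree $m$. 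I would combine this with the lacunary shape of $A^2$ obtained by integrating $(A^2)'=2(1+c)A+2c(X^q-X)$. Outside $p$-th powers, this forces
\[
A^2=a(y)^2X^{q+1}+\beta X^q+(\text{a polynomial in }X^p)+(\text{terms of degree}\le M+1)
\]
for some $\beta\in\F$. The aim is to compare the coefficient of $X^{M+1}$ (or of $X^{M}$) on both sides, expressed through $a_1(y)/a(y)$ via the recurrence \eqref{eq:coefficientrecurrence} specialized at $y$. I expect this to yield a quadratic relation in $c$ with roots $2$ and $1/2$, which is precisely the pair interchanged by the tangent/secant duality $A'(t)\in\{1,c\}$ (replacing $c$ by $1/c$). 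If that comparison fails to close, the fallback is the substitution $X\mapsto$ a suitable secant parameter in the identity $(A'-1)(A'-c)=-AA''$, which follows by differentiating \eqref{eq:AA} and holds at every point. One would exploit that its left side has degree $2m$, while the right side is divisible by $Q$.
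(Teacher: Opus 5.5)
Your derivation of \eqref{eq:AP} and \eqref{eq:AA} by specializing \eqref{eq:bivariate} at $Y=y$ is sound, since that derivation never uses $\delta>0$. Your partial-fraction argument is also correct: it gives $A=a(y)Q$, exactly $M$ lines, $w_tA'(t)=c$, $p\nmid w_t$, and $w_t\equiv c\pmod p$ for secants. This is a mild variant of the paper's order count at the roots of $P$.

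The genuine gap is \eqref{eq:types}, which you do not prove. The coefficient comparison you sketch does not close:
\begin{itemize}
\item In degrees strictly between $M$ and $q$, \eqref{eq:AA} gives only $c$-free relations, among them those behind \eqref{eq:coefficientrecurrence}.
\item The $X^q$ coefficient merely restates $c=Ma(y)^2$.
\item $[X^{M+1}]A^2$, equivalently $[X^M](AA')$, involves products of top and bottom coefficients of $A$. The recurrence says nothing about the bottom ones.
\end{itemize}
So the top coefficients carry no information about $c$, and ``I expect a quadratic relation'' is not an argument. Likewise, observing that $(A'-1)(A'-c)$ has degree $2m$ while $-AA''$ is divisible by $Q$ is not a contradiction by itself.

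The missing idea is to differentiate once more, evaluate at the roots of $A$, and count the roots of $A''$. Your fallback identity $AA''+(A'-1)(A'-c)=0$ is the right starting point. Differentiating it gives $AA'''+(3A'-1-c)A''=0$. At each of the $M$ roots $t$ of $A$ you already know $A'(t)\in\{1,c\}$. Hence $(2-c)A''(t)=0$ at secant parameters and $(2c-1)A''(t)=0$ at tangent parameters; equivalently, compare first-order terms of your identity at $t$.

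If $c\notin\{2,1/2\}$, then $A''$ vanishes at all $M$ distinct roots of $A$. But $A''=P''$ has degree $M-2$ and leading coefficient $a(y)M(M-1)=-a(y)/4\ne0$, because $M=1/2$ in $\F$. This contradiction is exactly how the paper proves \eqref{eq:types}. It needs neither the interpolation identities nor the lacunary shape of $A^2$.
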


\begin{proof}
All roots of $P$ lie in $\F$, so they are simple roots of $X^q-X$.
At a root of $P$ of multiplicity $w$, the polynomial $(X^q-X)P'$
therefore vanishes to order at least $1+(w-1)=w$. Hence
$P\mid(X^q-X)P'$, and since $AP'=PP'-(X^q-X)P'$, also $P\mid AP'$.
The polynomial $A$ has degree $M$ and leading coefficient
$a(y)\ne0$, by Lemma~\ref{lem:nonexceptional}, and $P'=A'-1$ has
degree $M-1=m$ and leading coefficient $Ma(y)$, because $M=1/2\ne0$ in
$\F$. Thus $\deg(AP')=q$, and comparing leading coefficients gives
$AP'=Ma(y)^2P=cP$. This proves \eqref{eq:AP}; substituting $P=A+X^q-X$
and $P'=A'-1$ gives \eqref{eq:AA}.

At a root $t$ of $P$ of multiplicity $w$, the polynomial $A$ vanishes,
since $t^q=t$, and $P'$ vanishes to order at least $w-1$. As
$c=a(y)^2/2\ne0$, \eqref{eq:AP} forces both orders to be exact. Thus $A$
has a simple root at $t$, and $p\nmid w$, since otherwise $P'$ would
vanish to order at least $w$. Conversely, every root of $A$ is a root
of $P$ by \eqref{eq:AP}. Hence $A=a(y)\prod_t(X-t)$, where $t$ runs over
the distinct roots of $P$. In particular, $A$ is squarefree, and $P$
has exactly $\deg A=M$ distinct roots, the parameters of the lines in
this direction that meet $U$.

At a root $t$ of $P$ of multiplicity $w$, write
\[
P=(X-t)^wP_t,\qquad A=(X-t)A_t,
\qquad P_t(t)\ne0,\quad A_t(t)=A'(t).
\]
Substitute these factorizations into \eqref{eq:AP}, cancel
$(X-t)^w$, and set $X=t$. The product rule gives
$wA_t(t)P_t(t)=cP_t(t)$, hence
\begin{equation}\label{eq:rootderivative}
wA'(t)=c.
\end{equation}
If $w=1$, this says $A'(t)=c$; if $w>1$, then $P'(t)=0$ and
$A'(t)=1$. Thus every secant in this direction has intersection number
congruent to $c$.

Differentiate \eqref{eq:AA} twice, one derivative at a time, to obtain
\begin{align}
AA''+(A'-1)(A'-c)&=0,\label{eq:diffone}\\
AA'''+(3A'-1-c)A''&=0.\label{eq:difftwo}
\end{align}
At a root of $A$, the coefficient of $A''$ in \eqref{eq:difftwo} is
$2-c$ if $A'=1$, and $2c-1$ if $A'=c$. If $c$ were neither $2$ nor
$1/2$, it would follow that $A''$ vanishes at all $M$ roots of $A$.
But $A''$ has degree $M-2$ and leading coefficient
$a(y)M(M-1)=-a(y)/4\ne0$. This contradiction proves \eqref{eq:types}.
\end{proof}

When $p>3$, we call a determined direction \emph{type $2$} or \emph{type $1/2$}
according to its value of $c$; the two values are distinct in this case.
Every secant in a direction of type $c$ has size congruent to $c$
modulo $p$, where $1/2\equiv(p+1)/2$. In particular, the type depends
only on $U$ and the direction.
Also every secant contains at most $M$ points of $U$, since each
of the other $M-1$ lines in its direction meeting $U$ contains at
least one point of $U$.

When $p=3$, the two values of $c$ coincide: $c=2=1/2=-1$.
Thus Lemma~\ref{lem:special} gives
\begin{equation}\label{eq:three-special}
AP'=-P,\qquad AA'=X-X^q,
\end{equation}
and every secant has intersection number $2$ modulo $3$.

\begin{lemma}\label{lem:centroid}
For each determined direction $d$, the centroids of the nonempty
intersections $U\cap\ell$, where $\ell$ has direction $d$, lie on
an affine line $L_d$, the \emph{centroid line} of $d$, whose direction
is not $d$. In particular, all singleton
points in direction $d$ lie on $L_d$, as does the midpoint of the
two points of $U$ on each bisecant in that direction. All the lines
$L_d$ pass through a common affine point $O$. We do not yet claim that
$O\in U$.
\end{lemma}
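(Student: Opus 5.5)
The plan is to express the sum of the $x$-coordinates of the points on each line of a fixed determined direction through a polynomial identity involving $R_Y$. Fix $y$ with $-y\in D_U$, and keep the notation $P$, $A$, $c$ of Lemma~\ref{lem:special}. Put $S(X)=R_Y(X,y)$. By \eqref{eq:gap}, $S=a'(y)X^M+a_1'(y)X^{M-1}+\cdots$ has degree at most $M$.

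\emph{Step 1: a divisibility.} Let $t$ be a root of $P$ of multiplicity $w$, and let $x_1,\dots,x_w$ be the abscissae of the points of $U$ on the line $yx+z=-t$. Differentiating the product \eqref{eq:redei} in $Y$ gives $S=\sum_{x}x\prod_{x'\ne x}(X+x'y+f(x'))$. Hence $S$ vanishes to order at least $w-1$ at $t$. Writing $P=(X-t)^wP_t$, we get
\[
\frac{S}{(X-t)^{w-1}}\Big|_{X=t}=(x_1+\cdots+x_w)P_t(t).
\]
Since $A$ has a simple root at $t$ (Lemma~\ref{lem:special}), the product $AS$ vanishes to order at least $w$ at every root of $P$, so $P\mid AS$. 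Now $\deg(AS)\le 2M=q+1$ and $P$ is monic of degree $q$. Therefore
\[
AS=(\mu X+\nu)P,
\]
where $\mu,\nu\in\F$ are read off from leading coefficients. Because $[X^{q-1}]P=0$, we get $\mu=a(y)a'(y)$ and $\nu=a(y)a_1'(y)+a_1(y)a'(y)$.

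\emph{Step 2: the centroid line.} Divide the identity $AS=(\mu X+\nu)P$ by $(X-t)^w$ and set $X=t$. Using $wA'(t)=c$ from \eqref{eq:rootderivative}, this gives $A'(t)\sum x_i=\mu t+\nu$. Hence $\sum x_i=w(\mu t+\nu)/c$, and since $p\nmid w$,
\[
\bar x=\frac{\mu t+\nu}{c}.
\]
The centroid is then $(\bar x,-t-y\bar x)$, which depends affinely on the parameter $t$. So all centroids lie on a line $L_d$. Eliminating $t=-z-yx$ gives its equation
\[
(c+\mu y)x+\mu z=\nu.
\]
The coefficients of $x$ and $z$ cannot both vanish, since $c\neq0$. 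Moreover, the direction vector $(\mu,-c-\mu y)$ of $L_d$ is not proportional to $(1,-y)$, because $c\ne0$. Hence $L_d$ is a genuine line and its direction is not $d$. Singletons are their own centroids, and the centroid of a bisecant is the midpoint of its two points, so the remaining claims about $L_d$ follow immediately.

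\emph{Step 3: concurrence.} This is the step I expect to be the crux, and it is where Lemma~\ref{lem:quotient} enters. Divide the equation of $L_d$ by $c=a(y)^2/2$. Write $\rho=a_1/a=r_0+r_1Y$, which is affine by Lemma~\ref{lem:quotient}. Then $\nu/a^2=\rho'+2\rho a'/a$, and the line $L_d$ becomes
\[
\frac{x}{2}+\frac{a'(y)}{a(y)}\bigl(yx+z-2\rho(y)\bigr)=r_1.
\]
The point $O=(2r_1,2r_0)$ satisfies this for every $y$: the bracket vanishes identically in $y$, and $x/2=r_1$. So every $L_d$ passes through $O$, which is an affine point independent of $d$. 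The only things to double-check are the coefficient bookkeeping for $\nu$, in particular that $\deg S\le M$ so that the $X^{q}$ coefficient of $AS$ is exactly $a a_1'+a_1a'$, and the use of $a(y)\neq0$ from Lemma~\ref{lem:nonexceptional} to justify the division.
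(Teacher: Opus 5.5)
Your proof is correct and follows essentially the paper's route. It uses the same divisibility $P\mid A\,R_Y(X,y)$ with a cofactor of degree at most one, evaluation at each root of $P$ (you use $wA'(t)=c$ directly, while the paper goes through $cH=uXP'$), and Lemma~\ref{lem:quotient} for concurrence. The only difference is bookkeeping: the paper first translates $U$ by $-(\beta_0,\beta_1)$, where $\beta=a_1/(Ma)=2\rho$, to make $a_1=0$, so the cofactor has no constant term and every $L_d$ passes through the origin. You instead keep $a_1$ and verify directly that $(2r_1,2r_0)$, which is exactly the paper's $O=(\beta_0,\beta_1)$, lies on each $L_d$.
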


\begin{proof}
Put
\[
\beta(Y):=\frac{a_1(Y)}{M a(Y)}.
\]
By Lemma~\ref{lem:quotient}, $\beta(Y)$ is affine; write
$\beta(Y)=\beta_0Y+\beta_1$. Translate $U$ by $-(\beta_0,\beta_1)$.
Translations preserve directions and commute with taking centroids, so
it suffices to prove the lemma for the translated set. Its R\'edei
polynomial is $R(X-\beta(Y),Y)$. Since
$(X-\beta(Y))^q=X^q-\beta(Y)^q$, this polynomial again has the form
\eqref{eq:gap} with the same $a(Y)$, so
Lemmas~\ref{lem:nonexceptional} and~\ref{lem:special} apply to it with
the same values of $c$. Its coefficient of $X^{M-1}$ is
$a_1(Y)-Ma(Y)\beta(Y)$, which vanishes by the definition of
$\beta(Y)$. Replacing $U$ by the translated set, we may therefore
assume $a_1(Y)=0$.

Fix $y\in\F$ with $-y\in D_U$ and retain $P(X)$, $A(X)$, and $c$ from
Lemma~\ref{lem:special}. Put $H(X)=R_Y(X,y)$. From now on, all
polynomials are in the variable $X$. At a root $t$ of $P$
of multiplicity $w$, differentiation of the product defining $R(X,Y)$
with respect to $Y$ shows
that $H$ has multiplicity at least $w-1$. Since $A$ is the radical of
$P$ up to a scalar, $P$ divides $AH$. The coefficients of
$X^{q-1},\ldots,X^{M+1}$ in $R(X,Y)$ vanish identically in $Y$, so
their $Y$-derivatives vanish as well. Consequently $\deg H\le M$,
and $\deg(AH)\le2M=q+1$. Divisibility therefore gives
\begin{equation}\label{eq:H}
A(X)H(X)=(uX+v)P(X)
\end{equation}
for some $u,v\in\F$, allowing $u=0$. Since $a_1(Y)=0$, the
coefficients of $X^{M-1}$ in $A$ and $H$, namely $a_1(y)$ and
$a_1'(y)$, vanish. Hence $[X^q](AH)=0$, while the
coefficient of $X^q$ on the right is $v$, because $M+1<q$.
Thus $v=0$.

Since $AH=uXP$ and $AP'=cP$, we have $A(cH-uXP')=0$, and hence
$cH=uXP'$. For a nonempty intersection
$S_t=\{(x,z)\in U:yx+z=-t\}$, put $w=|S_t|$, let $(\bar x,\bar z)$
be its centroid, and write $P(X)=(X-t)^wP_t(X)$ with $P_t(t)\ne0$.
Differentiate the product defining $R(X,Y)$ with respect to $Y$ and
set $Y=y$. By the product rule, the term coming from the factor
$X+xY+z$ with $(x,z)\in S_t$ is $x(X-t)^{w-1}P_t(X)$, and every other
term is divisible by $(X-t)^w$. Hence, modulo $(X-t)^w$,
\[
H\equiv(X-t)^{w-1}P_t\sum_{(x,z)\in S_t}x,\qquad
P'\equiv w(X-t)^{w-1}P_t.
\]
Dividing $cH=uXP'$ by $(X-t)^{w-1}$ and setting $X=t$ gives
$c\sum_{(x,z)\in S_t}x=utw$, since $P_t(t)\ne0$. As $p\nmid w$,
this says $c\bar x=ut$.
Since $y\bar x+\bar z=-t$, this gives $(c+uy)\bar x+u\bar z=0$.
Hence the centroids of all nonempty intersections $S_t$ lie on
\[
L_d=\{(x,z)\in\F^2:(c+uy)x+uz=0\},
\]
where $d$ is the direction of slope $-y$. Since $c\ne0$, the
coefficients $c+uy$ and $u$ are not both zero, so $L_d$ is a line
through the origin. Its direction is not $d$, because
$(c+uy)\cdot1-u\cdot y=c\ne0$. Translating back proves the assertion,
with $O=(\beta_0,\beta_1)$.
\end{proof}

For each determined direction $d$, the line $L_d$ does not have
direction $d$, so for every point $P$ the line through $P$ with
direction $d$ meets $L_d$ in a unique point $Q_P$. Define the
\emph{centroid reflection} $r_d$ by $r_d(P)=2Q_P-P$. It is an affine
involution that fixes $L_d$ pointwise and maps every line with
direction $d$ to itself. After translating $O$ to the origin, $r_d$ is
linear, with eigenspaces $L_d$, for eigenvalue $1$, and the line
through $O$ with direction $d$, for eigenvalue $-1$.

\begin{lemma}\label{lem:reflection-axes}
If $r_d(U)=U$ and $r_e(U)=U$ for distinct determined directions
$d,e$, then $L_d\ne L_e$.
\end{lemma}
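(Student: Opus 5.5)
Suppose, to reach a contradiction, that $L_d=L_e=:L$ while $r_d(U)=U$ and $r_e(U)=U$. After translating $O$ to the origin, both reflections are linear involutions fixing $L$ pointwise. Their $(-1)$-eigenspaces are the lines through $O$ with directions $d$ and $e$, and these are distinct. The plan is to study the composition $\tau=r_e\circ r_d$.

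This map fixes $L$ pointwise and has determinant $1$. It is not the identity, since $r_d\ne r_e$. Choose a basis $v_1\in L$ and $v_2$ along direction $d$. Then $r_d=\mathrm{diag}(1,-1)$, and $r_e$ fixes $v_1$ and sends $v_2$ to $-v_2+\lambda v_1$ for some $\lambda\ne0$, because $e\ne d$. Hence $\tau(v_2)=v_2-\lambda v_1$, so $\tau$ is a nontrivial transvection with axis $L$. It has order $p$ and preserves $U$.

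The key step is that $\tau$ fixes every line parallel to $L$. Such a line $\ell$ is mapped to itself, and on it $\tau$ acts as a nontrivial translation, unless $\ell=L$, where it acts trivially. Since $\langle\tau\rangle$ has order $p$, every $\tau$-orbit on $\ell\setminus L$ has size $p$. So for each line $\ell\ne L$ parallel to $L$, the number $|U\cap\ell|$ is divisible by $p$.

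Now use Lemma~\ref{lem:special}. Let $f$ be the direction of $L$.
\begin{itemize}
\item If $f$ is determined by $U$, every nonempty intersection with a line of direction $f$ has size prime to $p$. Hence every line parallel to $L$ other than $L$ misses $U$, so $U\subseteq L$. This contradicts $|D_U|\ge(q+3)/2>1$.
\item If $f$ is not determined, every line of direction $f$ meets $U$ in exactly one point, since $U$ has $q$ points and there are $q$ parallel lines. In particular some line $\ell\ne L$ of direction $f$ contains exactly $1$ point of $U$, and $p\nmid 1$. This is again a contradiction.
\end{itemize}
Either way $L_d\ne L_e$.

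The one point needing care is the claim that $\tau\ne\mathrm{id}$ acts freely on $\ell\setminus L$. This follows because a transvection moves each point off its axis by a nonzero multiple of the axis direction. I expect no real obstacle beyond keeping the eigenspace description of $r_d$ straight.
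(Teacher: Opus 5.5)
Your proof is correct and follows the paper's route: you compose the two reflections into a nontrivial transvection of order $p$ with axis $L$ that preserves $U$, and contradict the fact that every nonempty line intersection has size prime to $p$. The only difference is where you apply the orbit count. You apply it to the lines parallel to $L$, which needs only $U\not\subseteq L$. The paper counts globally to get $|U\cap L|\equiv q\equiv0\pmod p$, and then uses a tangent in direction $d$, whose point lies on $L_d=L$, to show $U\cap L\neq\varnothing$.
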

\begin{proof}
Suppose $L_d=L_e=L$. Translate $O$ to the origin and choose a basis
with first vector on $L$. The reflections then have matrices
\[
r_d=\begin{pmatrix}1&\theta\\0&-1\end{pmatrix},\qquad
r_e=\begin{pmatrix}1&\theta'\\0&-1\end{pmatrix},\qquad
\theta\ne\theta'.
\]
Their product sends $(x,z)$ to $(x+(\theta'-\theta)z,z)$.
Its fixed points are exactly the points of $L$, and every other
orbit has length $p$. Since it preserves $U$,
$|U\cap L|\equiv q\equiv0\pmod p$.
On the other hand, some line in direction $d$ is tangent to $U$:
otherwise the $M$ nonempty intersections would contain at least
$2M=q+1$ points. Its point of tangency lies on $L_d=L$, so
$|U\cap L|>0$. Every nonempty line intersection has size prime to
$p$, by Lemma~\ref{lem:special} in a determined direction and by
size one otherwise. This is a contradiction.
\end{proof}

\begin{lemma}\label{lem:large-secant}
If a secant contains
$k>(q+3)/4$ points of $U$, then $k=M$ and $U$ lies on two
nonparallel lines.
\end{lemma}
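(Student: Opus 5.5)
The plan is to combine a double count of the directions seen from the points of $\ell$ with the centroid lines of Lemma~\ref{lem:centroid}. The key observation is that singleton points in a direction $e$ lie on $L_e$. Let $d=\mathrm{dir}(\ell)$ and $S=U\cap\ell$, with $|S|=k$. For every determined direction $e\ne d$, the line $L_e$ either equals $\ell$ or meets $\ell$ in at most one point. Hence, unless $L_e=\ell$, at most one point of $S$ is a singleton point in direction $e$, and at least $k-1$ of the $e$-lines through $S$ are secants. Call a direction $e\ne d$ \emph{special} if $L_e=\ell$. I would first bound the number of special directions. For a special $e$, every $e$-secant through a point of $S$ has its centroid on $\ell$, and a bisecant through $Q\in S$ would then have its second point on $\ell$, which is absurd. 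So all $e$-lines through $S$ are either tangent or have size at least $3$ (and $\equiv c\pmod p$), which should make special directions rare.

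Next I would double count the pairs $(Q,P)$ with $Q\in S$ and $P\in U\setminus\ell$. There are $k(q-k)$ such pairs, and each gives a direction in $D_U\setminus\{d\}$. Fix a non-special direction $e$. The $e$-lines through $S$ carry at least $k-1$ points of $U\setminus\ell$. The other $M-k$ nonempty $e$-lines carry at least $M-k$ further points. Since $U\setminus\ell$ has only $q-k$ points, the secant sizes are forced to be small whenever $k$ is large. I expect that comparing these two counts, summed over the $|D_U|-1=M+\delta$ directions $e\ne d$, together with the congruences $w\equiv c\pmod p$ from Lemma~\ref{lem:special}, yields $k\ge M$ once $k>(q+3)/4$. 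The reverse bound $k\le M$ holds for every secant, as noted after Lemma~\ref{lem:special}. So this step gives $k=M$.

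Finally, with $k=M$, the $M-1$ points of $U\setminus\ell$ must be shown to be collinear on a line not parallel to $\ell$. Every non-special $e$ now has exactly the tight configuration: $k-1$ bisecants through $S$ and $M-k=0$ other lines. So from each $Q\in S$, all but at most one point of $U\setminus\ell$ are reached in distinct directions. Writing $O$ for the common point of Lemma~\ref{lem:centroid}, the midpoints of these bisecants lie on $L_e$, which passes through $O$. I would use this to show that $U\setminus\ell$ is the image of part of $S$ under a homothety or reflection centred at $O$, and hence lies on a single line $\ell'$. If $\ell'$ were parallel to $\ell$, the direction $d$ would have only two nonempty lines, contradicting the fact from Lemma~\ref{lem:special} that exactly $M$ lines in direction $d$ meet $U$.

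I expect the main obstacle to be the inequality in the second step. The naive count (at most $m$ pairs per direction) only gives roughly $k(q-k)\le(M+\delta)m$, which does not reach the threshold $(q+3)/4$. The improvement must therefore come from the centroid-line constraint on singleton points, plus a separate treatment of the special directions with $L_e=\ell$. I would try first to show that there is at most one special direction, using Lemma~\ref{lem:reflection-axes}-type arguments on the reflections $r_e$.
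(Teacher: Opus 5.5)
\paragraph{There is a genuine gap at the central step.} Nothing in your proposal derives $k=M$ from $k>(q+3)/4$. The counts you set up only give upper bounds. For a single direction $e\ne d$, the inequality $(k-1)+(M-k)\le q-k$ is just $k\le M$ again. Summing over directions gives $k(q-k)\le(M+\delta)m$, which, as you note yourself, is far from tight for $k$ near $(q+3)/4$. The centroid information you rely on (at most one singleton point of $\ell$ per non-special direction) is already used in these counts, so it cannot supply the missing lower bound.

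The threshold itself is the clue. The condition $k>(q+3)/4$ is exactly $2k-1>M$, which points to a degree separation in the identity \eqref{eq:AA}, not to counting. Work in the direction $d$ of $\ell$, translate the parameter of $\ell$ to $0$, and write $A=X+Z$. Here $Z=P-X^q$ satisfies $\deg Z=M$ and $\ord_0 Z=k$. Then \eqref{eq:AA} becomes $XZ'-cZ+ZZ'=cX^q$. Every term of $ZZ'$ has degree at least $2k-1>M$, while $\deg(XZ'-cZ)\le M$. Hence $XZ'=cZ$ and $ZZ'=cX^q$. Multiplying the second identity by $X$ gives $cZ^2=cX^{q+1}$, so $Z=\pm X^M$ and $k=M$.

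\paragraph{Other problems.} Your plan to control special directions through the reflections $r_e$ and Lemma~\ref{lem:reflection-axes} does not work here. The invariance $r_e(U)=U$ is proved only for type $2$ directions when $p>3$; nothing is known for type $1/2$ directions. In characteristic three, Lemma~\ref{three:lem:reflection} requires the bound \eqref{three:eq:kbound}, that is, that no secant has more than $(q+3)/4$ points. That is the negation of your hypothesis, so the lemma is unavailable in your setting.

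Your final step rests on an unproved homothety claim, but it is also unnecessary. Once $k=M$, Lemma~\ref{lem:special} says exactly $M$ lines of direction $d$ meet $U$, and together they carry $q=M+(M-1)$ points. So the other $M-1$ lines are tangents. Their points of tangency lie on $L_d$, whose direction is not $d$ by Lemma~\ref{lem:centroid}. Thus $\ell$ and $L_d$ are two nonparallel lines covering $U$.
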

\begin{proof}
Use the notation of Lemma~\ref{lem:special} in the direction of
the secant and translate its line parameter to zero. This preserves
\eqref{eq:AA}, since the translation belongs to $\F$. Write $A=X+Z$.
Then $\deg Z=M$ and $\ord_0 Z=k$, since $Z=P-X^q$ and $k\le M<q$.
Equation~\eqref{eq:AA} becomes
\[
XZ'-cZ+ZZ'=cX^q.
\]
The product $ZZ'$ has no term of degree less than $2k-1>M$,
whereas $\deg(XZ'-cZ)\le M$. Comparing terms of degree at most
$M$ gives $XZ'=cZ$, and hence $ZZ'=cX^q$. Multiplying the latter
identity by $X$ yields $cZ^2=cX^{q+1}$. Since $c\ne0$ and $2M=q+1$,
we obtain $Z=\pm X^M$ and $k=M$.
The other $M-1$ lines in this direction meeting $U$ are tangents.
Their points of tangency lie on the centroid line of this direction,
which is not parallel to the secant, by Lemma~\ref{lem:centroid}.
Hence the secant and this centroid line are two nonparallel lines
containing $U$.
\end{proof}

For a point $P$ and a direction $d$, write $Pd$ for the line through
$P$ with direction $d$.

\begin{lemma}[cf.~{\cite[Theorem~1.4.7]{CsajbokNotes}} and {\cite[Theorem~2.4(2)]{Csajbok}}]
\label{lem:index-comparison}
Suppose that distinct points $P,Q\in U$
each have exactly one singleton direction. If $|PQ\cap U|\ge3$,
then $|Pd\cap U|=|Qd\cap U|$ for every direction $d$.
In particular, their singleton directions agree and $PQ$ passes through $O$.
\end{lemma}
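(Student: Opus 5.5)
The plan is to encode the intersection numbers of the pencil of lines through a point of $U$ into a one-variable polynomial in $Y$, and then compare the polynomials for $P$ and $Q$ using the degree bounds of Lemma~\ref{lem:gap}. For a point $S=(x_S,z_S)\in U$ put $t_S(Y)=-x_SY-z_S$. Then $t_S(y)$ is the parameter of the line through $S$ with slope $-y$. With $A(X,Y)=R(X,Y)-X^q+X$, define
\[
G_S(Y)=A_X\bigl(t_S(Y),Y\bigr)\in\F[Y].
\]

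\textbf{Values of $G_S$.} For $y\in T$ we have $A(X,y)=0$, so $G_S(y)=0$. For a determined slope $-y$, \eqref{eq:rootderivative} gives $w\,G_S(y)=c(y)$, where $w=|S d\cap U|$. By Lemma~\ref{lem:special}, $w>1$ forces $w\equiv c$, so $G_S(y)=1$. Hence $G_S$ takes the value $0$ on $T$, the value $1$ on every determined direction except the unique singleton direction $d_S$, and the value $c(d_S)$ at $d_S$.

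\textbf{Degree of $G_S$.} Write $A=\sum_j a_j(Y)X^{M-j}$ with $\deg a_j\le m-1+j$, using $\deg h_i\le i-1$ from the proof of Lemma~\ref{lem:gap}. Substituting the linear polynomial $t_S(Y)$ then bounds the degree: the $X^{M-j}$ term of $A_X$ contributes at most $(m-1+j)+(M-j-1)=q-2$, so $\deg G_S\le q-2$.

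\textbf{Comparing $G_P$ and $G_Q$.} Let $y_0$ be the slope parameter of $PQ$. Since $t_P-t_Q=(x_Q-x_P)(Y-y_0)$, the difference $D=G_P-G_Q$ is divisible by $Y-y_0$. Write $D=(Y-y_0)E$ with $\deg E\le q-3$. By the value computation, $D$ vanishes on all of $\F$ except possibly at $d_P$ and $d_Q$, so $E$ vanishes at the $q-3$ elements of $\F\setminus\{y_0,d_P,d_Q\}$ (assuming $d_P\ne d_Q$). To force $E=0$ one more zero is needed, and I would take it at $y_0$. There $E(y_0)$ equals $(x_Q-x_P)$ times the $X$-derivative of $A_X(X,y_0)$ at the common root $t$, that is, essentially $A''(t)$. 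At $t$ the line $PQ$ is a $k$-secant with $k\ge3$ and $A'(t)=1$, so \eqref{eq:difftwo} gives $(2-c)A''(t)=0$. This settles the case $c\ne2$.

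\textbf{The main obstacle.} The case $c=2$, and $p=3$, is where I expect the real difficulty, and where the hypothesis $k\ge3$ rather than $k\ge2$ must enter. There I would expand $R(X,y_0)$ near $t$ to second order, using that $R$ has a root of order $k\ge3$ at $t$, so that $\partial_X^2R$ and the mixed derivatives vanish there. This should show that $D$ vanishes to order $2$ at $y_0$, possibly after replacing $A_X$ by $R_X$ or by a Hasse derivative so that the contribution of the factor $X^q-X$ disappears.

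\textbf{Conclusions.} Once $G_P=G_Q$, evaluating at $d_P$ gives $G_Q(d_P)=c(d_P)\ne1$, so $d_P$ is the singleton direction of $Q$, and $d_P=d_Q$. Both $P$ and $Q$ are then singleton points in this direction $d$, so by Lemma~\ref{lem:centroid} both lie on $L_d$. Since $P\ne Q$, this gives $PQ=L_d$, which passes through $O$.

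For the full equality $|Pd\cap U|=|Qd\cap U|$, the polynomial $G_S$ only records whether $w>1$, so I would repeat the argument with the Hasse derivatives $\Psi^{(j)}_S(Y)=\frac{1}{j!}\partial_X^jR\bigl(t_S(Y),Y\bigr)$ for $1\le j<p$. The value $\Psi^{(j)}_S(y)$ vanishes exactly when $|Sd\cap U|>j$. The degree bounds and the same comparison should then show, by induction on $j$, that the zero sets of $\Psi^{(j)}_P$ and $\Psi^{(j)}_Q$ coincide. Since $p\nmid w$ and $w\le M$, the residues of $w$ modulo $p$ together with the congruence $w\equiv c$ should then pin $w$ down. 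If this last bookkeeping fails, I would fall back to the centroid relation $cH=uXP'$ from the proof of Lemma~\ref{lem:centroid}, evaluated along $t_P$ and $t_Q$.
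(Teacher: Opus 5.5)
Your proposal has a genuine gap in the main assertion, that $|Pd\cap U|=|Qd\cap U|$ for every $d$. You believe $G_S$ ``only records whether $w>1$'', and the repair you propose cannot work. Hasse derivatives of order $j<p$ only detect whether $w>j$ for $j<p$. Every secant in a given direction has the same residue $c$ modulo $p$, so no congruence argument separates, say, $w=2$ from $w=p+2$, and the bound $w\le M$ does not help either.

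The missing observation is the following. We have $A_X=R_X+1$, and the factor of $R$ indexed by $S$ vanishes at $X=t_S(Y)$, so
\[
G_S(Y)-1=R_X\bigl(t_S(Y),Y\bigr)=\prod_{x\in\F\setminus\{x_S\}}\bigl((x-x_S)Y+f(x)-z_S\bigr).
\]
This has a root of multiplicity exactly $|Sd\cap U|-1$ at the parameter of each determined direction $d$. Hence $G_P=G_Q$ immediately gives equality of all intersection numbers. Up to $W=-Y$, this product is precisely the paper's $F_S(W)$, and in this form the whole proof becomes the paper's:
\begin{enumerate}
\item both products equal $-1$ on $T$;
\item they have common roots at the determined directions other than $d_P,d_Q$;
\item each has a root of order $k-1\ge2$ at $y_0$.
\end{enumerate}
Together these give $q-1$ zeros of $G_P-G_Q$, counted with multiplicity, against degree at most $q-2$. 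Neither Lemma~\ref{lem:special} nor the R\'edei coefficient bounds are needed. This is why the first assertion holds for every $q$-set with $|D_U|\le q$.

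The same observation removes what you call the main obstacle. Since $(X^q-X)''=0$, your $E(y_0)$ equals $(x_Q-x_P)P''(t)$ with $P=R(X,y_0)$. Moreover $P''(t)=0$ because $t$ is a root of $P$ of multiplicity $k\ge3$. This is exactly where $k\ge3$ enters, uniformly in $c$ and $p$, and \eqref{eq:difftwo} is not needed. As written, however, you leave the case $c=2$ unresolved, and this includes all of $p=3$. So even the ``in particular'' part, which is all that the later applications use, is incomplete in your proposal.

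Finally, your bound $\deg G_S\le q-2$ is false. The $X$-coefficient of $A$ is $h_{q-1}(Y)+1$, which has degree $q-1$ because $[Y^{q-1}]h_{q-1}=e_{q-1}(\F)=-1$; the bound $\deg h_i\le i-1$ holds only for $i\le q-2$. This top term does not depend on $S$ and cancels in $G_P-G_Q$. So $\deg(G_P-G_Q)\le q-2$ still holds, and the comparison itself is unaffected.
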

\begin{proof}
Choose a direction not determined by $U$ as $\infty$ and write
$U=\{(x,f(x)):x\in\F\}$, $P=(u,f(u))$, and $Q=(v,f(v))$.
Define
\[
F_P(W)=\prod_{x\in\F\setminus\{u\}}
\bigl((x-u)W-(f(x)-f(u))\bigr),
\]
and define $F_Q$ similarly. Both leading coefficients equal
$\prod_{\xi\in\F^*}\xi=-1$, so $\deg(F_P-F_Q)\le q-2$.
At each slope $t\in\F\setminus D_U$, the factors in either product
run through $\F^*$, giving $F_P(t)=F_Q(t)=-1$.
At a slope $d\in D_U$, the root multiplicities in $F_P$ and $F_Q$
are $|Pd\cap U|-1$ and $|Qd\cap U|-1$, respectively.
All but at most two slopes in $D_U$ are therefore common roots,
and the slope of $PQ$ is a common root of multiplicity at least two.
Unless $F_P-F_Q=0$, its roots have total multiplicity at least
\[
(q-|D_U|)+(|D_U|-2)+1=q-1,
\]
contrary to its degree bound. Hence $F_P=F_Q$, so the intersection
numbers agree in every finite direction, and both equal one in
direction $\infty$. Their unique singleton directions therefore
coincide, say at $e$. By Lemma~\ref{lem:centroid}, $P,Q\in L_e$,
so $PQ=L_e$ passes through $O$.
\end{proof}
The first assertion holds for every $q$-set with $|D_U|\le q$; only the
last sentence uses Lemma~\ref{lem:centroid}.

\begin{lemma}\label{lem:reflection-secant}
Suppose that $|D_U|\equiv0\pmod p$,
every secant meets $U$ in $2\pmod p$ points, and $r_d(U)=U$
for every $d\in D_U$. Then $U$ has a secant containing at least
$q-|D_U|+2$ points.
\end{lemma}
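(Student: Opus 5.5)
The plan is a local count at a point $P\in U$, $P\ne O$. I first show that $P$ has exactly one singleton direction. Then Lemma~\ref{lem:index-comparison} forces every secant with at least three points through $P$ to pass through $O$. So $P$ lies on at most one such secant, and the whole excess of the count is carried by that one line.

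\emph{Step 1: the congruence at $P$.} Fix $P\in U$. Every other point of $U$ lies on exactly one line through $P$, and that line has a determined direction. Hence
\[
\sum_{d\in D_U}\bigl(|Pd\cap U|-1\bigr)=q-1 .
\]
A tangent contributes $0$. By hypothesis a secant contributes a number $\equiv1\pmod p$. So the number $s(P)$ of secants through $P$ satisfies $s(P)\equiv q-1\equiv-1\pmod p$. Since $|D_U|\equiv0\pmod p$, the number $|D_U|-s(P)$ of singleton directions at $P$ is congruent to $1$ modulo $p$. In particular it is nonzero.

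\emph{Step 2: at most one singleton direction at $P\ne O$.} Suppose $P\ne O$ has singleton directions $d\ne e$. By Lemma~\ref{lem:centroid}, $P\in L_d\cap L_e$, and both lines pass through $O$. Since $r_d(U)=U$ and $r_e(U)=U$, Lemma~\ref{lem:reflection-axes} gives $L_d\ne L_e$. Two distinct lines through $O$ meet only in $O$, so $P=O$, a contradiction. Combined with Step 1, every $P\in U\setminus\{O\}$ has exactly one singleton direction. Consequently $P$ lies on exactly $|D_U|-1$ secants.

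\emph{Step 3: concentration of the excess.} Let $P\in U\setminus\{O\}$, which exists since $q>1$. Let $\ell$ be a secant through $P$ with $|\ell\cap U|\ge3$. Then $\ell\cap U$ contains a point $Q\ne P$ with $Q\ne O$. By Step 2, both $P$ and $Q$ have exactly one singleton direction. Lemma~\ref{lem:index-comparison} then says that $PQ=\ell$ passes through $O$. Two distinct lines through $P$ cannot both contain $O\ne P$, so $P$ lies on at most one secant of size at least $3$. Every other secant through $P$ is a bisecant and contributes exactly $1$ to the sum in Step 1. With $|D_U|-1$ secants through $P$, the remaining secant $\ell$ satisfies
\[
|\ell\cap U|-1=(q-1)-(|D_U|-2), \qquad\text{that is,}\qquad |\ell\cap U|=q-|D_U|+2 .
\]
If all secants through $P$ were bisecants, the sum would be $|D_U|-1<q-1$, which is impossible in the range \eqref{eq:range}. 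So such an $\ell$ exists.

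The main obstacle is Step 2. Everything hinges on excluding a point other than $O$ with several singleton directions. That exclusion comes from combining the distinctness of centroid lines (Lemma~\ref{lem:reflection-axes}, which needs the reflection hypothesis) with their concurrence at $O$. The only other point to check is that Lemma~\ref{lem:index-comparison} can be applied with both points different from $O$. This holds because a secant with at least three points contains at least two points of $U$ other than $O$.
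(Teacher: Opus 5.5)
Your proof is correct and follows the paper's argument step for step. The count at $P$ makes the number of singleton directions $\equiv1\pmod p$, and Lemma~\ref{lem:reflection-axes} caps it at one. Lemma~\ref{lem:index-comparison} then leaves $OP$ as the only possible non-bisecant through $P$. The one cosmetic difference is at the end: you compute the exact size $q-|D_U|+2$ of that secant and appeal to $|D_U|<q$ to show it exists. The paper instead writes the count as the inequality $q-1\le(k-1)+(|D_U|-2)$ for the largest secant size $k$, which needs no such appeal.
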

\begin{proof}
For $P\in U\setminus\{O\}$, let $\tau(P)$ count its singleton
directions. Each such direction $d$ satisfies $L_d=OP$.
Lemma~\ref{lem:reflection-axes} therefore gives $\tau(P)\le1$.
Counting the other points of $U$ on the lines through $P$ yields
\[
q-1\equiv |D_U|-\tau(P)\pmod p,
\]
so $\tau(P)=1$. Lemma~\ref{lem:index-comparison} now shows that
every secant avoiding $O$ is a bisecant.

Let $k$ be the largest secant size, and fix $P\in U\setminus\{O\}$.
Among the $|D_U|-1$ secants through $P$, only $OP$ can contain
more than two points of $U$. Hence
\[
q-1\le(k-1)+(|D_U|-2),
\]
which gives the assertion.
\end{proof}

The next lemma is a special case of the classification of blocking
sets of index three \cite{Cameron,Szonyi91,Senato}.

\begin{lemma}\label{lem:two-lines}
If $U$ lies on two nonparallel lines,
then $\delta=0$ and $U$ is affinely equivalent to $\UU_q$.
Conversely, $|D_{\UU_q}|=M+1$.
\end{lemma}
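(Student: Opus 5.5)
Assume $U\subseteq\ell_1\cup\ell_2$ for nonparallel lines $\ell_1,\ell_2$, and that $U$ satisfies the standing hypotheses, namely $|U|=q$ and \eqref{eq:range}. The plan is to normalize coordinates, count directions, and then use the multiplicative structure forced by Lemma~\ref{lem:special}. The converse will come out as a by-product.

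\textbf{Normalization.} Apply an affine map so that $\ell_1$ and $\ell_2$ are the coordinate axes, meeting at the origin. Write
\[
A_1=\{x\ne0:(x,0)\in U\},\qquad A_2=\{z\ne0:(0,z)\in U\},
\]
and let $\epsilon=1$ if $(0,0)\in U$ and $\epsilon=0$ otherwise. Then $|A_1|+|A_2|+\epsilon=q$. The axis directions are determined as soon as $|A_i|+\epsilon\ge2$. The remaining determined directions are the slopes $-z/x$ with $x\in A_1$ and $z\in A_2$, so the set $-A_2/A_1$ has size at most $|D_U|-2\le(q+p)/2-2$.

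\textbf{Counting.} By Cauchy--Davenport type bounds in the multiplicative group $\F^*$ (Kneser's theorem),
\[
|A_2A_1^{-1}|\ge|A_1|+|A_2|-|\Stab|,
\]
unless the stabilizer $\Stab$ of the product set is large. Since $|A_1|+|A_2|\ge q-1$, the product set has size at most about $q/2$ only if it is a union of cosets of a subgroup $H$ of index $2$, that is, of $H_2$. Then $A_2A_1^{-1}$ is a single coset of $H_2$, with $|A_1|,|A_2|\le(q-1)/2$. Together with $|A_1|+|A_2|\ge q-1$ this forces $|A_1|=|A_2|=(q-1)/2$, $\epsilon=1$, and each $A_i$ is a coset of $H_2$. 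In particular $|D_U|=2+(q-1)/2=M+1$, so $\delta=0$.

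\textbf{Avoiding Kneser.} If one prefers to avoid Kneser's theorem, the same conclusion should follow from the earlier lemmas. Lemma~\ref{lem:large-secant} gives that the larger axis contains exactly $M$ points, so both axes have $M$ points, since the other axis carries the $M-1$ tangency points plus possibly the origin. For a slope $s=-z/x$, the intersection numbers in that direction are $1$ or $2$, and the number of bisecants equals the number of pairs $(x,z)$ with $z=-sx$. Lemma~\ref{lem:special} forbids a bisecant count that leaves too few lines, and a direct count then shows that $A_2/A_1$ meets each coset of $H_2$ all-or-nothing. This yields the same coset structure as above.

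\textbf{Identification with $\UU_q$.} Rescale the axes by elements of $\F^*$ so that $A_1=A_2=H_2$. This gives the set $\{(0,0)\}\cup(H_2\times\{0\})\cup(\{0\}\times H_2)$ from the Remark. Exhibit an affine map carrying it to $\UU_q$: the lines $z=x$ and $z=-x$ contain the points $(x,x)$ for squares $x$ and $(x,-x)$ for nonsquares $x$. So $\UU_q$ lies on two lines through the origin, each containing $(q+1)/2$ points, and the point sets on each line form cosets of $H_2$ in the corresponding parametrization. Choosing coordinates along these two lines identifies $\UU_q$ with the model set, possibly after multiplying one axis by a nonsquare, which the rescaling freedom absorbs.

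\textbf{Converse.} Computing $D$ for the model set directly gives the two axes plus $-H_2$, so $|D_{\UU_q}|=2+(q-1)/2=M+1$.

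\textbf{Main obstacle.} I expect the hardest step to be the rigidity claim that the product set $A_2A_1^{-1}$ (or $A_2/A_1$) is exactly a coset of $H_2$. I would try Kneser's theorem first, and fall back on the secant-count argument via Lemma~\ref{lem:special} if the stabilizer case analysis gets messy.
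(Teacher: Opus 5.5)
Your Kneser route is correct, but it reaches the index-two stabilizer differently from the paper.

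\textbf{The paper's route.} It argues elementarily in three steps.
\begin{enumerate}
\item If $(0,0)\notin U$, then $|A_1|+|A_2|=q>|\F^*|$. So every $r\in\F^*$ has $rA_1\cap A_2\ne\varnothing$, and all $q+1$ directions would be determined.
\item The secant bound $M$, a consequence of Lemma~\ref{lem:special}, gives $|A_1|=|A_2|=m$.
\item For a missing quotient $r$, the half-size sets $rA_1$ and $A_2$ are complementary in $\F^*$. So a quotient $s$ is missing exactly when $sA_1=rA_1$, and the missing quotients form the coset $r\Stab(A_1)$ of size $m-\delta>2m/3$.
\end{enumerate}

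\textbf{Your route.} You apply Kneser's theorem to $S=A_2A_1^{-1}$ and get $|\Stab(S)|\ge|A_1|+|A_2|-|S|\ge 2m-(m+\delta)=m-\delta$. This is the same numerical bound, and it again needs $q\ge p^2$ when $\delta>0$ to exceed $(q-1)/3$. Then $(0,0)\in U$, $|A_1|=|A_2|=m$, the coset structure, and $\delta=0$ all follow at once from $S$ being a single coset of $H_2$.

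\textbf{What each buys.} Your route never uses the secant bound, so it is independent of the polynomial machinery behind Lemma~\ref{lem:special}. The paper's route is self-contained: its complement trick is exactly the elementary special case of Kneser's theorem needed once $|A_1|=|A_2|=m$ is known.

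\textbf{Small points to fix.}
\begin{itemize}
\item Kneser's inequality $|S|\ge|A_1|+|A_2|-|\Stab(S)|$ holds unconditionally. There is no ``unless'' clause; the large stabilizer is what you deduce from it.
\item Write out the arithmetic above instead of ``about $q/2$''.
\item Justify $|S|\le|D_U|-2$, i.e.\ that both axis directions are determined. Otherwise one axis carries at least $q-1$ points, which the secant bound excludes.
\end{itemize}

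\textbf{The ``Avoiding Kneser'' sketch.} This is not a proof as it stands. Lemma~\ref{lem:large-secant} gives one axis with $M$ points and the other with $M-1+\epsilon$ points, but it does not give $\epsilon=1$; for that you need the pigeonhole step above. Once $(0,0)\in U$ and $|A_1|=|A_2|=m$, Lemma~\ref{lem:special} (exactly $M$ lines meet $U$) forces exactly $m$ bisecants in each determined non-axis direction. That is, $\sigma A_1=A_2$ for every $\sigma\in A_2A_1^{-1}$, which yields the coset structure directly.
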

\begin{proof}
Both supporting lines are secants: otherwise one would contain at
least $q-1>M$ points. Make them the axes and write
\[
U=(A\times\{0\})\cup(\{0\}\times D)\cup U_0,
\qquad A,D\subseteq G:=\F^*,
\]
where $U_0$ is empty or $\{(0,0)\}$. If $U_0$ were empty, then
$|A|+|D|=q>|G|$, so every $r\in G$ would satisfy $rA\cap D\ne\varnothing$.
This would give $D/A=G$, so all $q+1$ directions would be determined,
a contradiction.
Thus $U_0=\{(0,0)\}$. The secant bound $M$ now gives
$|A|=|D|=m$, and $|D/A|=|D_U|-2=m+\delta$.
Choose a missing quotient $r$. The two half-sized sets $rA,D$ are
disjoint, so $D=G\setminus rA$. A quotient $s$ is missing precisely
when $sA=rA$. Hence the missing quotients form $rH$, where
$H=\Stab(A)$, and
\[
|H|=m-\delta>\frac{2m}{3}.
\]
For $\delta=0$ the inequality is immediate; otherwise it follows
from $q\ge p^2$ and $\delta\le(p-3)/2$. Since $H$ is proper, its
index in the group $G$ is exactly two. Therefore $\delta=0$, $H$ is
the subgroup $H_2$ of nonzero squares, and both $A,D$ are cosets of it.
Independent scaling of the axes gives
\begin{equation}\label{eq:axesmodel}
\{(0,0)\}\cup(H_2\times\{0\})\cup(\{0\}\times H_2).
\end{equation}
For a nonsquare $\nu$, the invertible map
$(u,v)\mapsto(u+\nu v,u-\nu v)$ sends this set to $\UU_q$:
$x^M=x$ on squares and $x^M=-x$ on nonsquares.
Conversely, \eqref{eq:axesmodel} determines the two axis directions
and the $m$ slopes in $-H_2$, proving the claimed count; see also
\cite[p.~190]{GLS}.
\end{proof}

\section{Tangent counts and centroid reflections when \texorpdfstring{$p>3$}{p>3}}
For this section assume $p>3$ and the range \eqref{eq:range}.
Let $h$ be the number of type $1/2$ directions.

\Needspace{6\baselineskip}
\begin{lemma}\label{lem:type-two-singletons}
If $d$ is a determined
direction of type $2$ and there are $s$ tangent lines to $U$ in
direction $d$, then
\[
1\le s\le K=\frac{q-1}{p-1}.
\]
Every secant in that direction has size at most $K+1$.
\end{lemma}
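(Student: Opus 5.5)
The plan is to work in the setting of Lemma~\ref{lem:special} for the slope $-y$ of $d$, with $P=R(X,y)$, $A=P-X^q+X$ and $c=2$. The $M$ distinct roots of $A$ are the parameters of the $M$ lines in direction $d$ that meet $U$. By \eqref{eq:rootderivative}, $A'(t)=c=2$ at the $s$ tangent parameters and $A'(t)=1$ at the secant parameters. The lower bound is the counting argument already used in the proof of Lemma~\ref{lem:reflection-axes}: if $s=0$, the $M$ nonempty intersections would contain at least $2M=q+1>q$ points.

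For the upper bound I will show that the polynomial $A'-2$ vanishes to order at least $(p-1)/2$ at every tangent parameter $t$. To do this, differentiate \eqref{eq:AA} $k$ times for $2\le k\le(p-1)/2$. Since $k<p$, the $k$th derivative of $X^q-X$ is zero, so the right side becomes $3A^{(k)}$. By Leibniz, the left side is $\sum_{i=0}^{k}\binom ki A^{(i)}A^{(k+1-i)}$. Evaluate at $t$ and argue by induction on $k$, assuming $A''(t)=\dots=A^{(k-1)}(t)=0$:
\begin{itemize}
\item the $i=0$ term vanishes because $A(t)=0$;
\item the terms with $2\le i\le k-1$ vanish by the induction hypothesis;
\item the $i=1$ and $i=k$ terms give $(k+1)A'(t)A^{(k)}(t)=2(k+1)A^{(k)}(t)$.
\end{itemize}
Hence $(2k+2-3)A^{(k)}(t)=(2k-1)A^{(k)}(t)=0$. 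For $2\le k\le(p-1)/2$ we have $2k-1\not\equiv0\pmod p$, so $A^{(k)}(t)=0$. The case $k=2$ is exactly \eqref{eq:difftwo} with $c=2$. Together with $A'(t)=2$, this gives $\ord_t(A'-2)\ge(p-1)/2$.

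Next, $A'-2$ is a nonzero polynomial of degree exactly $m=(q-1)/2$, since its leading coefficient is $Ma(y)\ne0$ by Lemma~\ref{lem:nonexceptional}. Summing the root orders over the $s$ tangent parameters then gives
\[
s\cdot\frac{p-1}{2}\le\frac{q-1}{2},\qquad\text{that is,}\qquad s\le K.
\]
The step needing the most care is this induction: checking that the $X^q$ term never contributes for $k<p$, and that the factor $2k-1$ first vanishes only at $k=(p+1)/2$. That is precisely why the argument stops at order $(p-1)/2$ and produces the constant $K$.

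Finally, the secant bound follows by counting. Take a secant in direction $d$ with $w$ points. There are $M-s-1$ other secants in direction $d$, each containing at least $2$ points, so
\[
q\ge s+w+2(M-s-1)=w+q-1-s.
\]
Thus $w\le s+1\le K+1$.
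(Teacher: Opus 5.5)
Your proof is correct. It is a small but genuine variant of the paper's argument: both bound $s$ by counting roots, with multiplicity, of a derivative of $A$ at the tangent parameters, using the differential equation \eqref{eq:AA}. The difference is which polynomial you count.

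The paper counts roots of $A''$, which has degree $M-2=m-1$. From \eqref{eq:difftwo} alone, one power-series comparison at a tangent parameter $u$ gives $2\mu+3=0$ in $\Fp$ for $\mu=\ord_uA''$, hence $\mu\ge(p-3)/2$. However, $A''=P''$ also vanishes at each secant parameter, to order exactly $w-2$; this uses $w\equiv2\pmod p$. By \eqref{eq:boundary-singletoncount} these secant roots contribute $s-1$, so the paper obtains $m-1\ge\frac{p-3}{2}s+(s-1)$.

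You count roots of $A'-2$, of degree $m$. It does not vanish at secant parameters, since $A'=1$ there, so only tangent multiplicities enter. You get these by iterating the Leibniz rule on \eqref{eq:AA}. Both routes end at the same inequality $m\ge\frac{p-1}{2}s$.

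What each buys:
\begin{itemize}
\item Your route avoids analysing the secant roots altogether.
\item The paper's route is shorter at the tangent points, since one comparison replaces your induction.
\item It also gives the exact congruence $\ord_uA''\equiv(p-3)/2\pmod p$.
\item It establishes \eqref{eq:boundary-singletoncount} along the way, and that identity is reused in Lemmas~\ref{lem:type-two-reflections} and~\ref{lem:residual-geometry}.
\end{itemize}

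One correction to your commentary. The $k$th derivative of $X^q-X$ vanishes for every $k\ge2$ simply because $q=0$ in $\F$, so the hypothesis $k<p$ plays no role there. Where $k<p$ does matter is in passing from $A''(t)=\dots=A^{((p-1)/2)}(t)=0$ to $\ord_t(A'-2)\ge(p-1)/2$. That step uses that the coefficient of $(X-t)^j$ in $A'-2$ equals its $j$th derivative at $t$ divided by $j!$, which needs $j!$ invertible in $\F$ for $j\le(p-3)/2$. This holds here, so the argument stands.
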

\begin{proof}
Use the notation of Lemma~\ref{lem:special}, with $c=2$.
The polynomial $A''$ is nonzero, with degree $M-2$ and leading
coefficient $-a(y)/4$. At a root $u$ corresponding to a tangent,
$A'(u)=2$ by \eqref{eq:rootderivative}. Put $\mu=\ord_u A''$.
Equation~\eqref{eq:difftwo} reads
\[
AA'''+3(A'-1)A''=0.
\]
Since $A(u+Z)=2Z+\text{terms of degree at least two}$,
comparison of the coefficient of $Z^\mu$ gives $2\mu+3=0$ in $\Fp$.
Thus $\mu\equiv(p-3)/2\pmod p$, and in particular
$\mu\ge(p-3)/2$.

For a secant with intersection number $w$, its root has multiplicity
exactly $w-2$ in $P''=A''$: indeed, $w\equiv2\pmod p$, so
$w(w-1)\ne0$ in $\F$. Exactly $M$ lines in direction $d$ meet
$U$, and their intersection numbers sum to $q$, whence
\begin{equation}\label{eq:boundary-singletoncount}
\sum_{\substack{\ell\text{ a secant}\\\mathrm{dir}(\ell)=d}}(|U\cap\ell|-2)=s-1.
\end{equation}
In particular $s\ge1$. Counting the roots of $A''$ just described,
with their multiplicities, yields
\[
M-2\ge\frac{p-3}{2}s+(s-1)
     =\frac{p-1}{2}s-1.
\]
This gives $s\le K$. Finally, each secant contributes
$w-2$ to the nonnegative sum in
\eqref{eq:boundary-singletoncount}, so $w\le s+1\le K+1$.
\end{proof}

\Needspace{6\baselineskip}
\begin{lemma}\label{lem:boundary-coincidence}
Let $p>3$, $q=p^n$, and $n\ge2$. Suppose that $U,V$ are $q$-sets
with $(q+3)/2\le|D_U|,|D_V|\le(q+p)/2$. If
\[
|U\cap V|>\max\Bigl\{\frac{q+p}{2}-1,\,(p-2)^2\Bigr\},
\]
then $U=V$.
\end{lemma}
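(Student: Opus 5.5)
The plan is to reduce to Lemma~\ref{lem:coincidence}. Put $W=U\cap V$ and let $D_W$ be the set of directions determined by $W$, so $D_W\subseteq D_U\cap D_V$. Lemma~\ref{lem:coincidence} needs three inputs. First, each of $U,V$ determines at least two directions, which is clear since $|D_U|,|D_V|\ge(q+3)/2$. Second, since $|W|$ is an integer exceeding $(q+p)/2-1$, we get $|W|\ge(q+p)/2\ge\max\{|D_U|,|D_V|\}$. Third, some direction must be determined by neither set. Because
\[
|D_U\cup D_V|=|D_U|+|D_V|-|D_U\cap D_V|\le q+p-|D_W|,
\]
the third input follows once we show $|D_W|\ge p$. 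So the whole proof reduces to a lower bound on the number of directions determined by $W$, and the hypothesis $|W|>(p-2)^2$ is there for exactly this step.

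\emph{$W$ is not collinear.} Every secant of $U$ contains at most $M$ points of $U$, as noted after Lemma~\ref{lem:special}. On the other hand $|W|\ge(q+p)/2>M$, so no line contains all of $W$.

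\emph{Key step: $|D_W|\ge p$.} Suppose instead that $N=|D_W|\le p-1$.

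First, no line $\ell$ can contain $N$ or more points of $W$. Indeed, $W$ is not collinear, so we may pick $Q\in W\setminus\ell$. Joining $Q$ to the points of $W\cap\ell$ gives at least $N$ distinct directions, all different from $\mathrm{dir}(\ell)$. Together with $\mathrm{dir}(\ell)$, which $W$ also determines if $|W\cap\ell|\ge2$, this makes at least $N+1$ directions, a contradiction. Hence every line meets $W$ in at most $N-1\le p-2$ points.

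Next, fix $P\in W$. The lines through $P$ with directions in $D_W$ cover $W$, and each contributes at most $p-3$ points besides $P$. Therefore
\[
|W|\le 1+N(p-3)\le1+(p-1)(p-3)=(p-2)^2,
\]
which contradicts $|W|>(p-2)^2$. So $|D_W|\ge p$.

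\emph{Conclusion.} Combining, $|D_U\cup D_V|\le q+p-p=q$, so some direction is determined by neither $U$ nor $V$. Lemma~\ref{lem:coincidence} then gives $U=V$.

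The main obstacle is the lower bound $|D_W|\ge p$. The two-stage counting above, first bounding line sizes within $W$ by $N-1$ and then counting the points of $W$ through a fixed point, is what I would try. Its bound matches $(p-2)^2$ exactly, which suggests it is the intended argument. The remaining steps are direct bookkeeping with the secant bound $M$ and Lemma~\ref{lem:coincidence}.
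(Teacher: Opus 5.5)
Your proposal is correct and follows the paper's proof essentially step for step. Both arguments use the secant bound $M$ to show $W$ is not collinear, bound line sizes in $W$ by $|D_W|-1$, count from a fixed point to get $|D_W|\ge p$, and then apply Lemma~\ref{lem:coincidence}. The only difference is cosmetic: you argue by contradiction with the bound $1+N(p-3)$, while the paper writes the direct bound $|C|\le(N_C-1)^2$.
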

\begin{proof}
Put $C=U\cap V$. By Lemma~\ref{lem:special}, no line contains
more than $M$ points of $U$. As $|C|>(q+p)/2-1\ge M$, the set $C$ is
not collinear. Write
$N_C=|D_C|$. A line containing $j\ge2$ points of $C$, together with
a point of $C$ outside that line, determines at least $j+1$
directions. Thus every line contains at most $N_C-1$ points of $C$.
Counting from a fixed point of $C$ gives
\[
|C|\le1+N_C(N_C-2)=(N_C-1)^2.
\]
Since $|C|>(p-2)^2$, it follows that $N_C\ge p$. Consequently
\[
|D_U\cup D_V|
\le |D_U|+|D_V|-|D_C|
\le q+p-p=q.
\]
Hence some direction is determined by neither $U$ nor $V$. Since
$|C|\ge(q+p)/2\ge\max\{|D_U|,|D_V|\}$, Lemma~\ref{lem:coincidence}
gives $U=V$.
\end{proof}

\begin{lemma}\label{lem:type-two-reflections}
For every type $2$ direction $d$, the centroid reflection $r_d$
preserves $U$.
Moreover, the map $d\mapsto L_d$ is injective on the type $2$
directions.
\end{lemma}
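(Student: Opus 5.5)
Put $V=r_d(U)$ and show $V=U$ with the coincidence results: Corollary~\ref{cor:coincidence} when $\delta=0$, Lemma~\ref{lem:boundary-coincidence} when $\delta>0$. Since $r_d$ is an affine involution, $V$ is a $q$-set with $D_V=r_d(D_U)$. In particular $|D_V|=|D_U|$, so $V$ lies in the same range \eqref{eq:range}. It then suffices to show that $U\cap V$ has more than $\max\{(q+p)/2-1,(p-2)^2\}$ points, or more than $M$ points when $\delta=0$.

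To count $U\cap V$, use that $r_d$ fixes every line $\ell$ of direction $d$. By Lemma~\ref{lem:centroid}, $r_d$ acts on $\ell$ as the point reflection through the centroid of $U\cap\ell$. Each tangent point lies on $L_d$ and is fixed, so the $s$ singleton points lie in $U\cap V$. A bisecant is swapped onto itself, since the midpoint of its two points is the centroid. Hence the only points of $U$ possibly outside $V$ lie on secants of size $w\ge3$. By \eqref{eq:boundary-singletoncount}, these secants have total excess $\sum(w-2)=s-1\le K-1$ (Lemma~\ref{lem:type-two-singletons}). Since $w\equiv2\pmod p$ and $p>3$, each such secant has $w\ge p+2$, so it contributes at most $w\le\frac{p+2}{p}(w-2)$ points. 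The points of $U$ not on larger secants therefore number at least
\[
q-\frac{p+2}{p}(s-1)\ \ge\ q-\frac{p+2}{p}(K-1).
\]
Because $K=(q-1)/(p-1)$ is much smaller than $q/2$ when $p>3$, this exceeds both $(q+p)/2-1$ and $M$, and also $(p-2)^2$, as $q\ge p^2$. This is the step to check carefully for $p=5$, where $K\approx q/4$. There $q-\tfrac75(q/4)\approx 0.65q>(q+5)/2$, which still works for $q\ge25$. Thus $|U\cap V|$ is large enough, the coincidence lemma applies, and $r_d(U)=U$.

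For injectivity, suppose $d\ne e$ are type $2$ with $L_d=L_e$. By the first part, both $r_d$ and $r_e$ preserve $U$, and this contradicts Lemma~\ref{lem:reflection-axes}. The main obstacle is the intersection count. One must verify that $r_d$ really fixes each line pointwise-centrally, that is, that it acts on $\ell$ as the affine reflection through the centroid. This holds because the centroid lies on $L_d\cap\ell=Q_P$. One must also confirm the numeric inequality uniformly in $n\ge2$ in both the $\delta=0$ and $\delta>0$ cases.
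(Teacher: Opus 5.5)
Your proposal is correct and follows essentially the paper's proof: the same overlap bound $|U\cap r_d(U)|\ge s+2b_2\ge q-\frac{p+2}{p}(K-1)$, then a coincidence lemma, then Lemma~\ref{lem:reflection-axes} for injectivity. The only differences are minor. The paper applies Lemma~\ref{lem:boundary-coincidence} in all cases rather than splitting by $\delta$. It also replaces your approximate numerics with the exact identity $q-\frac{p+2}{p}(K-1)=q-\frac{p+2}{p(p-1)}(q-p)$, where $\frac{p+2}{p(p-1)}<\frac12$; at $q=p^2$ this equals $p^2-p-2>(p-2)^2$.
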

\begin{proof}
In direction $d$, let $s,b_2,t$ count tangents, bisecants, and
lines meeting $U$ in more than two points, respectively. Write the
latter intersection numbers as $2+pe_i$, where $e_i\ge1$, and put
$\varepsilon=\sum e_i$. Equation~\eqref{eq:boundary-singletoncount} gives
$s=1+p\varepsilon$, while $t\le\varepsilon$. The reflection fixes every singleton
point and interchanges the two points of $U$ on every bisecant, by
Lemma~\ref{lem:centroid}. These fixed points and pairs give
\begin{align}
|U\cap r_d(U)|
&\ge s+2b_2=q-p\varepsilon-2t\notag\\
&\ge q-(p+2)\varepsilon
 \ge q-\frac{p+2}{p}(K-1)=:C_0.\label{eq:type-two-overlap}
\end{align}
Here $\varepsilon=(s-1)/p\le(K-1)/p$ by
Lemma~\ref{lem:type-two-singletons}. Since
\[
C_0=q-\frac{p+2}{p(p-1)}(q-p),
\qquad \frac{p+2}{p(p-1)}<\frac12,
\]
we have $C_0>(q+p)/2$. Moreover, $C_0$ is increasing in $q$,
so $q\ge p^2$ gives
\[
C_0\ge p^2-p-2>(p-2)^2.
\]
The sets $U$ and $r_d(U)$ determine the same number of directions.
Lemma~\ref{lem:boundary-coincidence} now gives $r_d(U)=U$.

Injectivity now follows from Lemma~\ref{lem:reflection-axes}.
\end{proof}

\begin{lemma}\label{lem:half}
One has
\begin{equation}\label{eq:hcongruence}
3h\equiv6+4\delta\pmod p.
\end{equation}
Every type $1/2$ direction has at least
\[
B=M-K=\frac{q+1}{2}-\frac{q-1}{p-1}
\]
singleton points, all on its centroid line. If $|D_U|=(q+3)/2$,
$h\equiv2\pmod p$, so $h\ge2$.
\end{lemma}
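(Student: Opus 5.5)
The plan is to derive all three assertions from counting intersection numbers modulo $p$, using only the congruences of Lemma~\ref{lem:special} and the fact, also from Lemma~\ref{lem:special}, that exactly $M$ lines of each determined direction meet $U$.

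\emph{Singleton bound.} Fix a type $1/2$ direction $d$. Let it have $s$ tangents and $b=M-s$ secants with intersection numbers $w_1,\dots,w_b$. Summing over the $M$ lines meeting $U$ gives
\[
\sum_{i=1}^b(w_i-1)=q-M=m.
\]
By Lemma~\ref{lem:special}, $w_i\equiv 1/2\equiv(p+1)/2\pmod p$ and $w_i>1$, so $w_i-1\ge(p-1)/2$. Hence $b(p-1)/2\le m$, which gives $b\le K$ and $s\ge M-K=B$. These singleton points lie on $L_d$ by Lemma~\ref{lem:centroid}.

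\emph{Secant counts modulo $p$.} For the congruence I double count collinear pairs of points of $U$. Every pair determines exactly one direction, and that direction lies in $D_U$. Therefore
\[
\sum_{d\in D_U}\ \sum_{\mathrm{dir}(\ell)=d}\binom{|U\cap\ell|}{2}=\binom q2\equiv0\pmod p .
\]
Since $p$ is odd, each binomial can be evaluated modulo $p$ through $w(w-1)/2$ in $\Fp$. Tangents contribute $0$.
\begin{itemize}
\item In a type $2$ direction every secant contributes $2\cdot1/2=1$. The number of secants is $M-s$, and $s=1+p\varepsilon$ by \eqref{eq:boundary-singletoncount}, as in the proof of Lemma~\ref{lem:type-two-reflections}. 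So this number is congruent to $M-1=m\equiv-1/2$.
\item In a type $1/2$ direction every secant contributes $(1/2)(-1/2)/2=-1/8$. The identity $\sum(w_i-1)=m$ above, read modulo $p$ with each $w_i-1\equiv-1/2$, gives $b\cdot(-1/2)\equiv-1/2$. So $b\equiv1$.
\end{itemize}

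\emph{The congruence.} There are $|D_U|-h$ type $2$ directions, and $|D_U|=M+1+\delta\equiv 3/2+\delta$. The pair count therefore becomes
\[
\Bigl(\tfrac32+\delta-h\Bigr)\Bigl(-\tfrac12\Bigr)-\tfrac h8\equiv0\pmod p .
\]
Multiplying by $-8$ yields $3h\equiv 6+4\delta\pmod p$, which is \eqref{eq:hcongruence}. When $|D_U|=(q+3)/2$ we have $\delta=0$, and since $p>3$, $3$ is invertible, so $h\equiv2\pmod p$. As $h\ge0$ and $p>2$, this forces $h\ge2$.

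The only delicate point is the bookkeeping of the secant counts modulo $p$ in each type: the type $2$ count relies on \eqref{eq:boundary-singletoncount}, and the type $1/2$ count on the sum identity. I expect that step to be the main obstacle; the rest is direct.
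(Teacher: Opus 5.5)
Your proof is correct. The singleton bound and the deduction $h\equiv2\pmod p$, $h\ge2$ match the paper's. The difference is in how you obtain the congruence \eqref{eq:hcongruence}.

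The paper gets it in one line on the polynomial side. The polynomial $a(Y)^2/2$ has degree $2(m-1)=q-3<q-1$, so its values sum to zero over $\F$. It vanishes on $T$ and equals $c\in\{2,1/2\}$ at the $|D_U|$ determined slopes. This gives $2(|D_U|-h)+h/2=0$ in $\Fp$, that is, $3h=4|D_U|=6+4\delta$.

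You instead double count collinear pairs modulo $p$. This needs only the combinatorial content of Lemma~\ref{lem:special}: exactly $M$ lines of each determined direction meet $U$, and every secant has size $\equiv c\pmod p$. You do not need to return to the degree of $a$ or its vanishing on $T$. The cost is the per-type bookkeeping of secant counts modulo $p$, which you carry out correctly. As a by-product, you learn that a type $1/2$ direction has $\equiv1$ secants and a type $2$ direction has $\equiv-1/2$ secants modulo $p$; the paper's argument does not give this.

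The step you flagged as delicate is not: you need not appeal to \eqref{eq:boundary-singletoncount} for type $2$. The same identity $\sum_i(w_i-1)=m$, now with $w_i-1\equiv1$, gives the secant count $\equiv m\equiv-1/2$ directly. So both types can be handled uniformly.
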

\begin{proof}
The polynomial $a(Y)^2/2$ has degree $q-3$, so its sum over $\F$
is zero. It vanishes on $T$ by Lemma~\ref{lem:gap} and takes the values $2$ and $1/2$ at the
$|D_U|$ elements $y$ with $-y\in D_U$. Thus
\[
0=2(|D_U|-h)+h/2,\qquad 3h=4|D_U|=6+4\delta\quad\text{in }\Fp.
\]
In a type $1/2$ direction, each secant contains at least $(p+1)/2$
points of $U$. The total excess over one point on each of the $M$
lines meeting $U$ is $q-M=m$. Consequently there are at most
$2m/(p-1)=K$ secants and at least $M-K=B$ singleton points.
Lemma~\ref{lem:centroid} places them on a line.
\end{proof}

\begin{proposition}\label{prop:largep}
In the range \eqref{eq:range}, if $p\ge7$ and $h>0$, then
$\delta=0$ and $U$ is affinely equivalent to $\UU_q$.
\end{proposition}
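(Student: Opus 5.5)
Fix a type $1/2$ direction $e$, which exists because $h>0$. The plan is to show that $U$ has a secant with more than $(q+3)/4$ points and then apply Lemma~\ref{lem:large-secant} and Lemma~\ref{lem:two-lines}.

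By Lemma~\ref{lem:half}, $e$ has at least $B=M-K$ singleton points, all on the centroid line $L_e$. For $p\ge7$ we have
\[
B=\frac{q+1}{2}-\frac{q-1}{p-1}\ge\frac{q+1}{2}-\frac{q-1}{6}>\frac{q+3}{4}
\]
once $q\ge p^2\ge49$. So it suffices to prove that $L_e$ is itself a secant containing all these singleton points, i.e.\ that $U\cap L_e$ has more than $(q+3)/4$ points. Since the singleton points already lie on $L_e$, this is automatic: $|U\cap L_e|\ge B>(q+3)/4$.

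We also need $L_e$ to be a secant rather than a line in an undetermined direction. This holds because it contains at least $B\ge2$ points of $U$, so its direction is determined.

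Lemma~\ref{lem:large-secant} then gives $|U\cap L_e|=M$ and places $U$ on two nonparallel lines. Lemma~\ref{lem:two-lines} yields $\delta=0$ and affine equivalence to $\UU_q$.

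The only real check is the numerical inequality $B>(q+3)/4$. It is equivalent to $\frac{q-1}{4}>\frac{q-1}{p-1}$, i.e.\ $p-1>4$, which holds for $p\ge7$. This is exactly why $p=5$ is excluded: there $B=(q+3)/4$, and the argument fails at the boundary. No obstacle remains beyond confirming that Lemma~\ref{lem:half} places all singleton points on a single line. Lemma~\ref{lem:centroid} guarantees this, since singleton points are centroids of one-point intersections.
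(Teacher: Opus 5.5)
Your proof is correct and is essentially the paper's argument: Lemma~\ref{lem:half} puts at least $B$ singleton points of a type $1/2$ direction on its centroid line, $B-\frac{q+3}{4}=\frac{q-1}{4}-\frac{q-1}{p-1}>0$ for $p\ge7$, and Lemmas~\ref{lem:large-secant} and~\ref{lem:two-lines} finish the proof. Your intermediate bound $B\ge\frac{q+2}{3}>\frac{q+3}{4}$ holds for every $q>1$, so the appeal to $q\ge49$ is unnecessary but harmless.
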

\begin{proof}
A type $1/2$ direction has at least $B$ collinear singleton
points, by Lemma~\ref{lem:half}. Since
\[
B-\frac{q+3}{4}=\frac{(q-1)(p-5)}{4(p-1)}>0,
\]
Lemma~\ref{lem:large-secant} puts $U$ on two nonparallel lines.
Lemma~\ref{lem:two-lines} gives the assertion.
\end{proof}

\section{Characteristic five: maximal lines and centroid lines}\label{sec:five}
Assume $p=5$, $h>0$, and \eqref{eq:range}. Here $\delta\in\{0,1\}$.
Write
\begin{equation}\label{eq:five-notation}
q=4K+1,\qquad b=K+1,\qquad M=2K+1,\qquad |D_U|=2K+2+\delta.
\end{equation}
In the field, $K=1$, $b=2$, and $M=3$.

\begin{lemma}\label{lem:maxsize}
Either $U$ is contained in two nonparallel lines, or its largest
secant size is exactly $b$.
\end{lemma}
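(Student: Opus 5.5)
Assume $U$ is not contained in two nonparallel lines. By Lemma~\ref{lem:large-secant}, every secant then has at most $(q+3)/4=K+1=b$ points. It remains to produce a secant of size exactly $b$. Here $p=5$, so the types are $c=2$ and $c=1/2=3$. A secant in a type~$2$ direction has size $\equiv2\pmod5$, and one in a type~$3$ direction has size $\equiv3\pmod5$. Note that $b=K+1\equiv2\pmod 5$, since $K\equiv1$. I will use the hypothesis $h>0$ and look at a type $1/2$ direction $d$.

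In direction $d$ there are exactly $M=2K+1$ lines meeting $U$, and the total excess over one point per line is $q-M=2K$. Every secant in $d$ has size $\ge3$, so excess $\ge2$. Hence there are at most $K$ secants, and exactly $K$ only if all are $3$-secants. Then there are at least $B=M-K=K+1=b$ singleton points in direction $d$. By Lemma~\ref{lem:centroid}, all of them lie on the centroid line $L_d$. So $|U\cap L_d|\ge b$.

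The direction of $L_d$ is not $d$. If it were undetermined, $L_d$ would meet $U$ in one point, which contradicts $b\ge2$. So $L_d$ is a secant with at least $b$ points, and by the first step exactly $b$ points. This completes the proof.

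The only delicate point is the first step: we need $(q+3)/4=b$ and an integer bound $k\le\lfloor (q+3)/4\rfloor$ to be consistent. Since $q=4K+1$, we get $(q+3)/4=K+1$ exactly, and Lemma~\ref{lem:large-secant} needs strict $k>(q+3)/4$. So I expect no obstacle beyond this bookkeeping. The congruence check that $b\equiv2\pmod5$ shows that the extremal secant $L_d$ must lie in a type~$2$ direction (or have size $1$, which is excluded). This gives a consistency check but is not needed.
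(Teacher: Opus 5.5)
Your proof is correct and follows the paper's own argument. Both combine the upper bound $k\le(q+3)/4=b$ from Lemma~\ref{lem:large-secant} with the lower bound $k\ge B=b$, which comes from the collinear singleton points of a type $1/2$ direction; you re-derive that lower bound inline instead of citing Lemma~\ref{lem:half}.
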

\begin{proof}
Let $k$ be the largest secant size. Lemma~\ref{lem:half} gives
$k\ge B=b=(q+3)/4$. If $k>b$, Lemma~\ref{lem:large-secant}
puts $U$ on two nonparallel lines. Otherwise $k=b$.
\end{proof}

Assume henceforth that the largest secant size is $b$.
A \emph{maximal line} will mean a $b$-secant. Translate the common
point $O$ of the centroid lines to the origin.

\Needspace{9\baselineskip}
\begin{lemma}\label{lem:residual-geometry}
Suppose that the largest secant size is $b$. Then the following hold.
\begin{enumerate}[label=(\roman*)]
\item The direction of every maximal line has exactly one
      $b$-secant, $K$ bisecants, and $K$ tangent lines to $U$.
      Its centroid line is another maximal line through $O$.
\item In every type $1/2$ direction there are exactly $K$ trisecants
      and $b$ tangent lines to $U$. Its centroid line is maximal.
\item Every maximal line passes through $O$, and $O\in U$.
      The \emph{centroid correspondence}
      $\ell\mapsto L_{\mathrm{dir}(\ell)}$, which sends a line to the
      centroid line of its direction, permutes the maximal lines.
\item There are exactly two or four maximal lines.
\end{enumerate}
\end{lemma}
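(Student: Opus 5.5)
The plan is to work direction by direction. For each kind of direction, I compare the secant sizes allowed by Lemma~\ref{lem:special} with the counting identities in Lemmas~\ref{lem:type-two-singletons} and~\ref{lem:half}. The key arithmetic input is that in $\mathbb F_5$ we have $K\equiv1$, hence $b\equiv2$. So a maximal line has a type $2$ direction. Moreover, a collinear set of exactly $K$ points of $U$ cannot be a whole secant, since secant sizes are $\equiv2$ or $3\pmod5$.

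For (i), let $\ell$ be maximal with direction $d$, which is of type $2$. In \eqref{eq:boundary-singletoncount}, $\ell$ alone contributes $b-2=K-1$, so $s\ge K$. Lemma~\ref{lem:type-two-singletons} gives $s\le K$, so $s=K$ and every other secant in direction $d$ is a bisecant. Since $M=2K+1$ lines in direction $d$ meet $U$, there are exactly $K$ bisecants. By Lemma~\ref{lem:centroid}, the $K$ singleton points lie on $L_d$, so $L_d$ is a secant with at least $K$ points. Its size is at most $b=K+1$ and cannot equal $K$, so $|L_d\cap U|=b$ and $L_d$ is maximal. By Lemma~\ref{lem:type-two-reflections}, $r_d$ preserves $U$. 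The fixed points of $r_d$ on $U$ are exactly $U\cap L_d$: the singleton points, plus possibly a point of $\ell$ (bisecant pairs are swapped). Hence the extra point of $L_d$ is $\ell\cap L_d$, which therefore lies in $U$. For (ii), in a type $1/2$ direction every secant has size $\ge3$ and $\le b$. Lemma~\ref{lem:half} gives at least $B=M-K=b$ singleton points on $L_e$. Since $L_e$ has at most $b$ points, equality holds, and $L_e$ is maximal. The remaining $M-b=K$ lines carry total excess $m=2K$, with each excess at least $2$, so all of them are trisecants.

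For (iii), the main obstacle is proving that every maximal line passes through $O$. I would first attempt this with commuting reflections. Let $\ell$ be maximal in direction $d$, put $\ell'=L_d$ (maximal, of type-$2$ direction $d'$ by (i)), and consider $L_{d'}$. Both $r_d$ and $r_{d'}$ preserve $U$. Since $r_d$ fixes $\ell'$ pointwise, it fixes $d'$, so $r_d r_{d'}r_d=r_{d'}$, and the two reflections commute. Then $r_d$ preserves the eigenline $L_{d'}$ of $r_{d'}$. As $L_{d'}$ passes through $O$ and is not $L_d$ (its direction is $d'$, while $L_d$ has direction $d'$ and $L_{d'}$ does not), it must be the $(-1)$-eigenline of $r_d$, namely the line $Od$. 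So $Od$ is a maximal line in direction $d$. By (i), direction $d$ has only one $b$-secant, so $Od=\ell$. This shows $\ell\ni O$ and $L_{d'}=\ell$. In particular $O=\ell\cap L_d\in U$ by the fixed-point observation in (i). If the reflection-commutation step has a gap, the fallback is to count singleton directions at points of $\ell$ modulo $5$, as in Lemma~\ref{lem:reflection-secant}, and apply Lemma~\ref{lem:index-comparison} to force $\ell$ through $O$.

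The argument above also shows that the centroid correspondence maps maximal lines to maximal lines and is an involution without fixed lines, since $L_d$ has direction $\ne d$. It is therefore a permutation of the maximal lines consisting of disjoint transpositions, and the number $N$ of maximal lines is even. Since $h>0$, (ii) gives at least one maximal line, so $N\ge2$. All maximal lines pass through $O\in U$ and pairwise meet only at $O$. Hence $1+N(b-1)\le q=4K+1$, which gives $N\le4$. Therefore $N\in\{2,4\}$, proving (iv).
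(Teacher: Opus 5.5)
Your arguments for (i) and (ii) are essentially the paper's. For (iii) and (iv) you take a genuinely different route, and it is correct.

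The paper argues in three separate steps:
\begin{enumerate}
\item It shows every maximal line passes through $O$ using the injectivity of $d\mapsto L_d$ on type $2$ directions (Lemma~\ref{lem:type-two-reflections}, resting on Lemma~\ref{lem:reflection-axes}) together with finiteness. This makes every maximal line some centroid line.
\item It proves $O\in U$ by a separate count in a type $1/2$ direction. If $O\notin U$, the $b$ points of a second maximal line lie on distinct trisecants, forcing $q\ge 4b=q+3$.
\item It obtains evenness by letting one reflection $r_d$ act on the maximal lines; $r_d$ fixes exactly its two eigenlines among them.
\end{enumerate}

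You instead show that $r_d$ preserves $L_{d'}$, where $d'$ is the direction of $L_d$. This forces $L_{d'}=Od=\ell$, so the centroid correspondence is a fixed-point-free involution on the maximal lines. That single fact gives $O\in\ell$, the permutation property, and evenness at once. Your observation in (i) then gives $O\in U$ with no use of type $1/2$ directions: the extra point of $U\cap L_d$ must be $\ell\cap L_d$, because points on bisecants are moved by $r_d$.

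Your route proves the sharper pairing $L_{\mathrm{dir}(L_d)}=\ell$ and avoids Lemma~\ref{lem:reflection-axes}. The paper's route uses only statements already on record and needs no conjugation argument.

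You should spell out the step $r_dr_{d'}r_d=r_{d'}$, or rather the consequence you actually use, $r_d(L_{d'})=L_{d'}$. It holds because $r_d$ is affine, satisfies $r_d(U)=U$, and maps the parallel class $d'$ to itself. Since affine maps commute with centroids, $r_d$ permutes the $M\ge2$ distinct centroids in direction $d'$, and these determine $L_{d'}$.

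Two smaller fixes. Tidy the parenthetical justifying $L_{d'}\ne L_d$: the point is simply that $L_d$ has direction $d'$ and $L_{d'}$ does not. And drop the hedged fallback, which is not needed.
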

\begin{proof}
Let $\ell$ be a maximal line. Its direction is of type $2$, since
$b\equiv2\pmod5$. In this direction, equation~\eqref{eq:boundary-singletoncount} and
Lemma~\ref{lem:type-two-singletons} give
\[
s\ge1+(b-2)=K,\qquad s\le K.
\]
Thus $s=K$, and the $b$-secant contributes the entire sum $s-1$;
every other secant in this direction is a bisecant. There are
$M-K-1=K$ such bisecants. The centroid line contains the $K$
singleton points. As $K\ge6$ and $K\equiv1\pmod5$, it cannot
contain exactly $K$ points of $U$, by Lemma~\ref{lem:special}.
It therefore contains $K+1=b$ points, proving (i).

For a type $1/2$ direction, write the intersection numbers of its
secants as $3+5e_i$ and put $\varepsilon=\sum e_i$. If there are $t$ secants,
counting the excess over one point on each of the $M$ lines meeting
$U$ gives
\[
2K=2t+5\varepsilon,\qquad s=M-t=b+\frac{5\varepsilon}{2}.
\]
All singleton points lie on a line, so $s\le b$. Hence $\varepsilon=0$,
$s=b$, and all $K$ secants in this direction are trisecants. This proves (ii).

By (i), the centroid correspondence maps the maximal lines to
maximal lines through $O$. Distinct maximal lines have distinct
directions, since (i) gives a unique maximal line in each such
direction. Lemma~\ref{lem:type-two-reflections} therefore makes the
correspondence injective. As the set of maximal lines is finite,
the correspondence permutes it, and every maximal line passes through
$O$. There are at least two, since the centroid line of a direction
$d$ does not have direction $d$.

Suppose $O\notin U$. Choose a type $1/2$ direction, whose $b$
singleton points fill its maximal centroid line $L$, and choose
another maximal line $L'$. All $b$ points of $U\cap L'$ lie outside
$L$ and are nonsingletons for the chosen direction. Since the direction
of $L'$ is of type $2$, these points belong to distinct trisecants.
Consequently $q\ge b+3b=q+3$, a contradiction. This proves (iii).

Let $\mathcal M$ be the set of maximal lines. As they all pass through
$O\in U$, their union contains $1+|\mathcal M|K$ points of $U$, so
$|\mathcal M|\le4$.
The reflection in Lemma~\ref{lem:type-two-reflections} associated
with any maximal line permutes the maximal lines. Exactly two of
them are fixed: the chosen line and its centroid line, which are
the two eigenspaces of the reflection. Thus $|\mathcal M|$ is even,
proving (iv).
\end{proof}

\begin{proposition}\label{prop:five}
In the range \eqref{eq:range}, if $p=5$ and $h>0$, then
$\delta=0$ and $U$ is affinely equivalent to $\UU_q$.
\end{proposition}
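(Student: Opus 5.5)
By Lemma~\ref{lem:maxsize}, either $U$ lies on two nonparallel lines or its largest secant size is $b$. In the first case Lemma~\ref{lem:two-lines} gives $\delta=0$ and $U\simeq\UU_q$ directly. So the whole task is to show that the configuration described in Lemma~\ref{lem:residual-geometry} is impossible. We then assume the largest secant size is $b$. By Lemma~\ref{lem:residual-geometry}, there are two or four maximal lines, all through $O\in U$, and the centroid correspondence permutes them. The contradiction will come from two ingredients. The first is Lemma~\ref{lem:index-comparison}, applied to points with a unique singleton direction on a secant of size at least three. The second is a direct computation restricting which directions can have a given maximal line as their centroid line.

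\textbf{Step 1: the centroid-line equation.} Fix a maximal line $L$ through $O$ and choose coordinates with $O$ at the origin. The proof of Lemma~\ref{lem:centroid} writes $L_d$ as $(c+uy)x+uz=0$, where $u=u(y)$ comes from $cH=uXP'$. I would first make $u(y)$ explicit as a function of $y$. Comparing leading coefficients in $cH=uXP'$ gives $c\,a'(y)=u\,M a(y)$, so $u=2c\,a'(y)/a(y)$. Since $c=a(y)^2/2$, the condition $L_d=L$ becomes a polynomial equation in $y$ built from $a$ and $a'$.

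\textbf{Step 2: counting solutions.} Using \eqref{eq:aE} in the form $a'/a=\sum_{t\in T}1/(y-t)$, together with $\sum_{t\in\F}1/(y-t)$-type identities, I would try to rewrite this condition as a quadratic equation in $y$ whose coefficients depend only on $L$. This is the ``quadratic equation'' announced in the outline. It would show that at most two determined directions have centroid line $L$. I would then count which directions must have centroid line $L$. These are the $b$ tangent points in each type $1/2$ direction lying on its maximal centroid line, together with the preimages of $L$ under the centroid correspondence among the maximal lines. This count bounds $h$ by roughly twice the number of maximal lines, so $h\le 8$, or less once the type $2$ preimages are removed. Combined with $3h\equiv 6+4\delta\pmod 5$ from Lemma~\ref{lem:half}, this pins down $h$.

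\textbf{Step 3: local counting at a point.} Take a point $P\in U$ on a maximal line $L'$ with $P\ne O$. By Lemma~\ref{lem:residual-geometry}(i),(ii), in each type $1/2$ direction $P$ lies on a trisecant unless $P$ lies on that direction's centroid line. In each type $2$ direction $P$ lies on a bisecant or on the $b$-secant, or it is a singleton on the centroid line. Since all maximal lines pass through $O$, the singleton directions at $P$ are exactly the directions $d$ with $L_d=L'$. If there is exactly one such direction, Lemma~\ref{lem:index-comparison} applied to two points of $L'$ forces equal intersection profiles. Comparing the profiles of points on different maximal lines, or of a point on a maximal line and one off all of them, with the counts $q-1=\sum(|Pd\cap U|-1)$ should give a contradiction modulo $5$, much like the argument in Lemma~\ref{lem:reflection-secant}. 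The cases of two and of four maximal lines would be treated separately. With two maximal lines, the correspondence swaps or fixes them, which forces many type $1/2$ directions onto the same centroid line and contradicts Step 2.

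\textbf{Main obstacle.} The hardest part is Step 2: deriving a clean quadratic constraint from $a'/a$ when only $T$, and not $D_U$, is explicitly known. If $\delta=1$, the extra factor of $a$ also complicates the computation. If that derivation fails, I would instead first rule out $\delta=1$ by the congruence $3h\equiv 10\equiv 0\pmod 5$, which forces $h\equiv0$. I would then combine this with the reflections $r_d$ of Lemma~\ref{lem:type-two-reflections}, which generate a group acting on the two or four maximal lines, to obtain a count contradiction.
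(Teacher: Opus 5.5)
There is a genuine gap. The proposal names the right ingredients, but the two steps that produce the contradiction are missing, and the mechanism you propose for the first one cannot work.

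\textbf{Step 2 cannot give a quadratic.} Step~1 itself is fine and gives $u=a(y)a'(y)$. But then, for a maximal line $L\colon\alpha x+\beta z=0$ (with $\beta\ne0$, since $L$ is a secant), the condition $L_d=L$ reads $G(y)=0$, where $G(Y)=\tfrac{\beta}{2}a(Y)+(\beta Y-\alpha)a'(Y)$. This polynomial has exact degree $m-1$, and its roots depend on the unknown set $T$. Nothing in \eqref{eq:aE} lowers this degree, and \eqref{eq:aE} needs $\delta=0$ anyway. So your Step~2 gives no bound on the number of directions with centroid line $L$.

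The quadratic in the paper is geometric and exists only in the case of four maximal lines. There $U$ is covered by four lines through $O$. A trisecant in a type $1/2$ direction with centroid line $L=\{z=0\}$ avoids $U\cap L$, because all those points are singletons in that direction. Hence it meets each other maximal line $x=a_iz$ exactly once. The centroid of these three points lies on $L$, which gives $\sum_i1/(a_i-t)=0$, i.e.\ $F'(t)=0$ for $F(W)=\prod_i(W-a_i)$. This is a genuine quadratic because $3\ne0$ in characteristic five. Note that it bounds by $2$ only the number $h_L$ of type $1/2$ directions with centroid line $L$, not all determined directions.

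\textbf{The global count does not finish.} Even granting such a bound, the congruence $3h\equiv6+4\delta$ is not enough. The case $\delta=0$, $h=2$ is consistent with it, so your claim that two maximal lines force ``many'' type $1/2$ directions onto one centroid line fails. Your fallback fails too, since $h\equiv0\pmod5$ still allows $h=5$.

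\textbf{What Step 3 needs.} You need the local congruence you only gesture at. For $P\in U\setminus\{O\}$, every singleton direction at $P$ has centroid line $OP$. So $\tau_2(P)\le1$ by the injectivity in Lemma~\ref{lem:type-two-reflections}. Counting the other points on the lines through $P$ modulo $5$ then gives $\tau_2(P)+2\tau_{1/2}(P)\equiv h+\delta\pmod5$.
\begin{itemize}
\item With two maximal lines, a point off both has $\tau_{1/2}=0$. This is impossible for $\delta=0$, and forces $\tau_2=1$ for $\delta=1$. In the latter case, take $A\in(U\cap L_2)\setminus\{O\}$ and the trisecant through $A$ in a type $1/2$ direction with centroid line $L_1$; it avoids $O$. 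Lemma~\ref{lem:index-comparison} must be applied to its two points off $L_1\cup L_2$. Applying it to two points of one maximal line, as you suggest, gives nothing, since that line already passes through $O$.
\item With four maximal lines, every $P\in(U\cap L)\setminus\{O\}$ has $\tau_2(P)=1$ and $\tau_{1/2}(P)=h_L$. Hence $h_L\equiv3\pmod5$ if $\delta=0$ and $h_L\equiv0\pmod5$ if $\delta=1$. Together with $h_L\le2$ and $h>0$, this is the contradiction.
\end{itemize}
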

\begin{proof}
By Lemmas~\ref{lem:maxsize} and \ref{lem:two-lines}, it suffices
to exclude the case where the largest secant size is $b$, treated in
Lemma~\ref{lem:residual-geometry}.
For $P\in U\setminus\{O\}$, let $\tau_2(P)$ and $\tau_{1/2}(P)$ be
the numbers of type $2$ and type $1/2$ directions, respectively,
in which the line through $P$ is tangent to $U$. Every such direction
has centroid line $OP$. Lemma~\ref{lem:type-two-reflections} gives
$\tau_2(P)\le1$. Counting the other points of $U$ on the lines
through $P$ with directions in $D_U$, modulo five, gives
\[
q-1\equiv (|D_U|-h-\tau_2(P))+2(h-\tau_{1/2}(P))\pmod5.
\]
Since $|D_U|\equiv4+\delta\pmod5$, this is
\begin{equation}\label{eq:five-singleton-congruence}
\tau_2(P)+2\tau_{1/2}(P)\equiv h+\delta\pmod5.
\end{equation}
Equation~\eqref{eq:hcongruence} also gives
\begin{equation}\label{eq:five-h}
h\equiv
\begin{cases}
2\pmod5,&\delta=0,\\
0\pmod5,&\delta=1.
\end{cases}
\end{equation}

Suppose first that there are two maximal lines, $L_1,L_2$.
Their union contains $2K+1<q$ points of $U$. If
$P\in U\setminus(L_1\cup L_2)$, then $\tau_{1/2}(P)=0$, since every
type $1/2$ centroid line is maximal. When $\delta=0$,
\eqref{eq:five-singleton-congruence} gives
$\tau_2(P)\equiv2\pmod5$, contrary to $\tau_2(P)\le1$.
This excludes the case of two maximal lines when $\delta=0$.
When $\delta=1$, the same congruence gives $\tau_2(P)=1$. Thus each point
outside the two maximal lines has exactly one singleton direction.

Choose a type $1/2$ direction $d$, and relabel so that $L_d=L_1$.
Take $A\in(U\cap L_2)\setminus\{O\}$. The line $\ell$ through $A$
with direction $d$ is a trisecant, with its other two points $P,Q$
outside $L_1\cup L_2$. Indeed, all points of $U\cap L_1$ are
singletons in direction $d$, and $\ell\ne L_2$. Also $\ell$ avoids $O$.

Since $P,Q$ each have a unique singleton direction and $\ell$ is a
trisecant, Lemma~\ref{lem:index-comparison} forces $\ell$ to pass
through $O$, a contradiction.

It remains to consider four maximal lines. Their union contains
$1+4K=q$ points of $U$, so it contains all of $U$. For each maximal
line $L$, let $h_L$ count the type $1/2$ directions whose centroid
line is $L$. The centroid correspondence permutes the maximal lines.
For the maximal-line direction with centroid line $L$, its $K$
singleton points are precisely $(U\cap L)\setminus\{O\}$:
$O$ lies on its $b$-secant and is not a singleton point in this direction. Consequently,
for every $P\in(U\cap L)\setminus\{O\}$, one has
$\tau_2(P)=1$ and $\tau_{1/2}(P)=h_L$. Equations
\eqref{eq:five-singleton-congruence} and \eqref{eq:five-h} give
\begin{equation}\label{eq:five-hL}
h_L\equiv
\begin{cases}
3\pmod5,&\delta=0,\\
0\pmod5,&\delta=1.
\end{cases}
\end{equation}

We show that $h_L\le2$. Make $L$ the line $z=0$, and write the
other three maximal lines as $x=a_i z$, where $a_1,a_2,a_3$ are
distinct. The lines in any type $1/2$ direction with centroid line
$L$ have equations $x-tz=c$, with $c\in\F$, for some
$t\notin\{a_1,a_2,a_3\}$: the direction is not that of $L$, since
$L$ is its centroid line, and it differs from the directions of the
other three maximal lines, whose directions are of type $2$. A trisecant in this direction avoids $O$ and
the points of $U\cap L$, and has one point on each of the other
three maximal lines. For some $c\ne0$, those three points are
\[
\left(\frac{a_i c}{a_i-t},\frac{c}{a_i-t}\right),
\qquad i=1,2,3.
\]
By Lemma~\ref{lem:centroid}, their centroid lies on the centroid
line $L$, so its $z$-coordinate
$\frac{c}{3}\sum_{i=1}^3\frac1{a_i-t}$ vanishes. As $c\ne0$,
\[
\sum_{i=1}^3\frac1{a_i-t}=0.
\]
Writing $F(W)=\prod_{i=1}^3(W-a_i)$, this says $F'(t)/F(t)=0$, that
is, $F'(t)=0$. The polynomial $F'$ is quadratic, with leading
coefficient $3\ne0$ in characteristic five. Distinct type $1/2$
directions with centroid line $L$ have distinct values of $t$, so
$h_L\le2$.

Since $h>0$, some maximal line satisfies $1\le h_L\le2$.
This contradicts \eqref{eq:five-hL} in both cases. Hence the largest
secant size is not $b$, completing the proof.
\end{proof}

\section{Characteristic three}\label{sec:three-reflections}
Here assume $p=3$. Then $\delta=0$, so $|D_U|=m+2$,
and every secant meets $U$ in $2,5,8,\ldots$ points.
If $s$ is the number of tangents in a determined direction $d$,
subtracting twice the number $M$ of lines in that direction meeting
$U$ from the sum $q$ of their intersection numbers gives
\begin{equation}\label{eq:singleton3}
s=1+\sum_{\substack{\ell\text{ a secant}\\\mathrm{dir}(\ell)=d}}(|U\cap\ell|-2).
\end{equation}
In particular $s\equiv1\pmod3$ and $s\ge1$.

Translate $O$ to the origin from now on. To prove that $U$ lies on two
nonparallel lines, suppose the contrary. By Lemma~\ref{lem:large-secant},
the largest secant size then satisfies
\begin{equation}\label{three:eq:kbound}
k\le\frac{q+3}{4}.
\end{equation}
We work under this assumption until the contradiction at the end of
Section~\ref{sec:three-reflections}.

\subsection*{Centroid reflections}

\begin{lemma}\label{three:lem:reflection}
Under \eqref{three:eq:kbound}, for every $d\in D_U$, the reflection $r_d$
preserves $U$.
\end{lemma}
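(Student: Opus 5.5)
The plan is to copy the type $2$ argument of Lemma~\ref{lem:type-two-reflections}: show that $U$ and $r_d(U)$ share many points, then invoke Corollary~\ref{cor:coincidence}. For $p=3$ we have $\delta=0$, so both $U$ and $r_d(U)$ determine exactly $(q+3)/2$ directions. Hence it suffices to show
\[
|U\cap r_d(U)|>M=\frac{q+1}{2}.
\]
By Lemma~\ref{lem:centroid}, $r_d$ fixes every singleton point in direction $d$, since these lie on $L_d$. It also swaps the two points of $U$ on each bisecant in direction $d$, since their midpoint lies on $L_d$. So if $s$, $b_2$ and $t$ count the tangents, bisecants and larger secants in direction $d$, then
\[
|U\cap r_d(U)|\ge s+2b_2.
\]

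Next I bound the loss from the larger secants. Write their sizes as $2+3e_i$ with $e_i\ge1$, and put $\varepsilon=\sum e_i$. Then \eqref{eq:singleton3} gives $s=1+3\varepsilon$, and $t\le\varepsilon$. Since the intersection numbers in direction $d$ sum to $q$, we get
\[
s+2b_2=q-\sum_i(2+3e_i)\ge q-5\varepsilon .
\]
It therefore suffices to prove $5\varepsilon<(q-1)/2$, that is, $\varepsilon<(q-1)/10$. Equivalently, the number of tangents must satisfy roughly $s<3(q-1)/10$.

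The main obstacle is this upper bound on $s$. In the type $2$ case it came from counting roots of $A''$ (Lemma~\ref{lem:type-two-singletons}). For $p=3$ this fails: the coefficient $2\mu+3$ becomes $2\mu$, so tangents only force $\mu\equiv0\pmod3$, which may be $\mu=0$. Here I will use hypothesis \eqref{three:eq:kbound} instead.

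My first attempt is to count on $L_d$, which carries all $s$ singleton points. Lemma~\ref{lem:centroid} says $L_d$ is not parallel to $d$, so $L_d$ is either a line in a determined direction or meets $U$ in one point. In the first case $s\le|U\cap L_d|\le k\le(q+3)/4$. In the second case $s\le1$. If $L_d$ is a secant of size at most $(q+3)/4$, this gives $\varepsilon=(s-1)/3\le(q-1)/12<(q-1)/10$, which is exactly what is needed. Substituting, $|U\cap r_d(U)|\ge q-5(q-1)/12>M$, and Corollary~\ref{cor:coincidence} yields $r_d(U)=U$.

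I should check that $s\ge2$ forces $L_d$ to be a secant. This holds because two singleton points on $L_d$ already make it a secant, and its size is then at most $k$ by \eqref{three:eq:kbound}. The case $s=1$ is trivial, since then $\varepsilon=0$. So the whole argument reduces to the collinearity of the singleton points, which Lemma~\ref{lem:centroid} provides, together with the bound $k\le(q+3)/4$.
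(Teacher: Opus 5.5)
Your proposal is correct and is essentially the paper's proof: both bound $s\le k\le(q+3)/4$ using the collinearity of the singleton points on $L_d$, combine this with \eqref{eq:singleton3} to get $|U\cap r_d(U)|\ge s+2b_2\ge(7q+5)/12>M$, and conclude with Corollary~\ref{cor:coincidence}. The only difference is bookkeeping: the paper writes the count as $s+2b_2=M+(b_2-t)$ with $b_2-t\ge(q-1)/12$, whereas you write it as $q-5\varepsilon$ in the style of Lemma~\ref{lem:type-two-reflections}, which yields the same bound.
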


\begin{proof}
For this direction let $s$ count tangents, $b_2$ bisecants, and $t$
lines meeting $U$ in at least five points. We have
$M=s+b_2+t$, and \eqref{eq:singleton3} gives $s\ge1+3t$.
All singleton points lie on $L_d$, so $s\le k$. Therefore
\begin{align*}
b_2-t
&=M-s-2t\\
&\ge M-\frac{5s-2}{3}\\
&\ge \frac{q+1}{2}-\frac{5(q+3)/4-2}{3}
 =\frac{q-1}{12}>0.
\end{align*}
The reflection fixes every singleton point and swaps the two points
of $U$ on every bisecant, because their midpoint is on $L_d$. Consequently
\begin{equation}\label{three:eq:overlap}
|U\cap r_d(U)|\ge s+2b_2=M+(b_2-t)>M.
\end{equation}

Since $r_d$ is an invertible linear transformation, $U$ and $r_d(U)$
are $q$-sets each determining $(q+3)/2$ directions. Thus
\eqref{three:eq:overlap} and Corollary~\ref{cor:coincidence} give $r_d(U)=U$.
\end{proof}

\subsection*{Completion}

Every secant has size $2\pmod3$, and $|D_U|=(q+3)/2\equiv0\pmod3$.
Lemmas~\ref{three:lem:reflection} and~\ref{lem:reflection-secant}
therefore give a secant of size at least
\[
q-|D_U|+2=\frac{q+1}{2}=M,
\]
contrary to \eqref{three:eq:kbound}. Consequently $U$ lies on two
nonparallel lines, and Lemma~\ref{lem:two-lines} identifies it with
$\UU_q$. This proves Theorem~\ref{thm:main} in characteristic three
for $n\ge2$.

\section{Completion and the boundary case \texorpdfstring{$|D_U|=(q+p)/2$}{|D\string_U|=(q+p)/2}}
\begin{proof}[Proof of Theorem~\ref{thm:main}]
For $n=1$, the assertion is the Lov\'asz--Schrijver theorem
\cite{LS}. Suppose $n\ge2$. The characteristic-three case was
completed above. For $p>3$,
the assumption $|D_U|=(q+3)/2$ and \eqref{eq:hcongruence} give $h\equiv2\pmod p$,
so $h>0$. Proposition~\ref{prop:largep} or
Proposition~\ref{prop:five}, as appropriate, gives the classification.
\end{proof}

We next exclude the boundary case $|D_U|=(q+p)/2$ for $p>3$.

\Needspace{6\baselineskip}
\begin{proposition}\label{prop:equality}
Let $q=p^n$, where $p>3$ is prime and $n\ge2$. There is no $q$-set
in $\AG(2,q)$ determining exactly $(q+p)/2$ directions.
\end{proposition}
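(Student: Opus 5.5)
Suppose $U$ is a $q$-set with $|D_U|=(q+p)/2$, so $\delta=(p-3)/2>0$ and we are in the range \eqref{eq:range}. The plan is to reach a contradiction through Lemma~\ref{lem:reflection-secant}, which is tailored to this situation. Its hypotheses hold as follows.
\begin{itemize}
\item $|D_U|\equiv 0\pmod p$: indeed $2|D_U|=q+p\equiv0$.
\item \eqref{eq:hcongruence} gives $3h\equiv 6+2(p-3)=2p\equiv0$, so $h\equiv0\pmod p$.
\end{itemize}

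The first step is to show $h=0$. If $h>0$, then Proposition~\ref{prop:largep} (for $p\ge7$) or Proposition~\ref{prop:five} (for $p=5$) forces $\delta=0$, contradicting $\delta>0$. Hence every determined direction has type $2$. By Lemma~\ref{lem:special}, every secant then meets $U$ in a number of points congruent to $2\pmod p$. By Lemma~\ref{lem:type-two-reflections}, $r_d(U)=U$ for every $d\in D_U$. All hypotheses of Lemma~\ref{lem:reflection-secant} are therefore satisfied.

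Lemma~\ref{lem:reflection-secant} then produces a secant with at least
\[
q-|D_U|+2=\frac{q-p}{2}+2
\]
points. On the other hand, Lemma~\ref{lem:type-two-singletons} bounds every secant in a type $2$ direction by $K+1=(q-1)/(p-1)+1$. For $p\ge5$ we have $(q-p)/2+2>K+1$ whenever $q\ge p^2$, since $(q-1)/(p-1)\le(q-1)/4<(q-p)/2+1$. This is the desired contradiction.

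An alternative ending would use Lemma~\ref{lem:large-secant}: the secant has more than $(q+3)/4$ points, so it has exactly $M$ points and $U$ lies on two lines. Lemma~\ref{lem:two-lines} then gives $\delta=0$, again a contradiction.

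The main obstacle is mostly bookkeeping. Lemma~\ref{lem:type-two-reflections} was stated under the standing assumptions, which include $p>3$, $n\ge2$ and the range \eqref{eq:range}, and all of these hold here. Propositions~\ref{prop:largep} and~\ref{prop:five} likewise apply directly in the range \eqref{eq:range}, with the $\delta>0$ coordinates fixed after Lemma~\ref{lem:gap}. No further computation is expected.
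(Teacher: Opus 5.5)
Your proposal is correct and follows the paper's proof essentially step for step: exclude $h>0$ via Propositions~\ref{prop:largep} and~\ref{prop:five}, then combine Lemma~\ref{lem:type-two-reflections} with Lemma~\ref{lem:reflection-secant} to obtain a secant with at least $(q-p+4)/2$ points, which contradicts the bound $K+1$ from Lemma~\ref{lem:type-two-singletons}. The congruence $h\equiv0\pmod p$ you note is not needed, and your alternative ending through Lemmas~\ref{lem:large-secant} and~\ref{lem:two-lines} is also valid.
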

\begin{proof}
Suppose that such a set $U$ exists, and put $N=|D_U|=(q+p)/2$.
If $h>0$, Proposition~\ref{prop:largep} or
Proposition~\ref{prop:five} would make $U$ affinely equivalent to
$\UU_q$, contrary to $N>(q+3)/2$. Hence $h=0$, so every determined
direction has type $2$. In particular, every secant has size
$2\pmod p$. If $k$ is the largest secant size, then
Lemma~\ref{lem:type-two-singletons} gives
\begin{equation}\label{eq:boundary-kbound}
k\le K+1.
\end{equation}
Every centroid reflection preserves $U$, by
Lemma~\ref{lem:type-two-reflections}. Since $N\equiv0\pmod p$,
Lemma~\ref{lem:reflection-secant} gives
\[
k\ge q-N+2=\frac{q-p+4}{2}.
\]
This contradicts \eqref{eq:boundary-kbound}, because
\[
\frac{q-p+4}{2}-(K+1)
=\frac{(p-3)(q-p)}{2(p-1)}>0.
\]
\end{proof}

\begin{proof}[Proof of Theorem~\ref{thm:stability}]
If $|D_U|<(q+3)/2$, the established direction theorem
\cite{BBBS,Ball} gives the affine $\Fp$-linear alternative.
When $p=3$, the only remaining case is $|D_U|=(q+3)/2$, which is
classified by Theorem~\ref{thm:main}.
Suppose $p>3$ and $(q+3)/2\le |D_U|\le(q+p)/2$.
Proposition~\ref{prop:equality} excludes $|D_U|=(q+p)/2$, so the
common framework applies with
$0\le\delta<(p-3)/2$. If $h=0$, equation
\eqref{eq:hcongruence} implies
\[
p\mid 6+4\delta,\qquad 6\le6+4\delta<2p.
\]
There is no even positive multiple of the odd prime $p$ in this
interval. Thus $h>0$, and Propositions~\ref{prop:largep}
and \ref{prop:five} force $\delta=0$ and $U\simeq\UU_q$.
This proves the theorem and excludes the entire stated interval,
including $|D_U|=(q+p)/2$.
\end{proof}

Theorem~\ref{thm:stability} also strengthens Corollary~\ref{cor:coincidence}
and Lemma~\ref{lem:boundary-coincidence}.

\begin{corollary}
\label{cor:full-coincidence}
Let $q=p^n$, where $p>2$ is prime and $n\ge2$. Suppose that $U,V$ are
$q$-sets with $|D_U|,|D_V|\le(q+p)/2$. If
\[
|U\cap V|>\frac{q+1}{2},
\]
then $U=V$. The threshold is sharp.
\end{corollary}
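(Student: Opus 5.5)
By Theorem~\ref{thm:stability}, each of $U$ and $V$ is either an affine $\Fp$-linear set or affinely equivalent to $\UU_q$. In the first case $|D|\le(q-1)/(p-1)$; in the second $|D|=M+1$. In all cases $|D_U|,|D_V|\le M+1$. Since $p>2$, we have $(q-1)/(p-1)\le(q-1)/2<M+1$.

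The plan is to apply Lemma~\ref{lem:coincidence} with $|U\cap V|\ge M+1\ge\max\{|D_U|,|D_V|\}$. This requires three things.

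\begin{enumerate}
\item Each set determines at least two directions. A $q$-set determining only one direction is a line. A line is affine $\Fp$-linear, so this degenerate case must be handled separately. If $U$ is a line, then $U\cap V$ contains more than $M$ collinear points of $V$. If $V\simeq\UU_q$, this contradicts the secant bound $M$ from Corollary~\ref{cor:coincidence}. If $V$ is a linear set that is not a line, then $V\cap\ell$ is a coset of an $\Fp$-subspace of dimension less than $n$, so it has at most $q/p<M+1$ points. Hence $V$ is also a line, necessarily the same one, and $U=V$.
\item Some direction is determined by neither set. Assume both sets determine at least two directions.
\item The counting for step 2, which I expect to be the main obstacle. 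As in Corollary~\ref{cor:coincidence}, $C=U\cap V$ is not collinear, since each set has at most $M$ points on a line. This follows for $\UU_q$ from the argument in Corollary~\ref{cor:coincidence}, and for a non-line linear set from the $q/p$ bound. Then $|D_U\cup D_V|\le |D_U|+|D_V|-|D_C|$.
\end{enumerate}

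The estimate in step 3 splits into cases. If both sets have $M+1$ directions, then $|D_C|\ge3$ suffices, exactly as in Corollary~\ref{cor:coincidence}. If at least one set is linear, then $|D_U|+|D_V|\le(q-1)/(p-1)+M+1\le (q-1)/2+(q+3)/2=q+1$, so again $|D_C|\ge 1$ already gives $|D_U\cup D_V|\le q$. In every case an undetermined direction exists. Lemma~\ref{lem:coincidence} then gives $U=V$, because $|U\cap V|\ge M+1$ exceeds both $|D_U|-1$ and $|D_V|-1$. Hence the difference of the reduced graph polynomials has too many roots.

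Sharpness comes from the Remark after Corollary~\ref{cor:coincidence}. Both sets there lie on two lines, so by Lemma~\ref{lem:two-lines} they are affinely equivalent to $\UU_q$ and determine $(q+3)/2\le(q+p)/2$ directions. They are distinct and share exactly $M$ points.
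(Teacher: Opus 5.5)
Your proof is correct and follows the paper's route: you use Theorem~\ref{thm:stability} to reduce to the linear and $\UU_q$ cases, count directions to find one determined by neither set, and compare reduced graph polynomials via their degree bounds. The only difference is that in the linear case the paper bounds the degree of the linearized graph polynomial directly by $q/p\le M$ rather than invoking Lemma~\ref{lem:coincidence}, which avoids the separate treatment of lines that your route needs because that lemma requires two determined directions.
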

\begin{proof}
Put $M=(q+1)/2$ and $K=(q-1)/(p-1)$. By
Theorem~\ref{thm:stability}, each set is either affine $\Fp$-linear
or affinely equivalent to $\UU_q$. If both are of the latter type,
Corollary~\ref{cor:coincidence} applies.
Otherwise, interchange the sets if necessary so that $U$ is affine
$\Fp$-linear. Every determined-direction line through a point of
$U$ contains at least $p$ points of $U$, whence $|D_U|\le K$.
Theorem~\ref{thm:stability} gives $|D_V|\le M+1$. Since $U\cap V$ contains
two distinct points, the direction sets have a common element, so
\[
|D_U\cup D_V|\le K+M\le q.
\]
Choose a direction determined by neither set as $\infty$. The reduced graph
polynomial of $U$ is a constant plus a linearized polynomial, of
degree at most $q/p\le M$. The reduced graph polynomial of $V$ has degree
at most $M$ as well: use the same observation in the linear case,
and Lemma~\ref{lem:gap} otherwise. Their difference has more than
$M$ roots, so it is zero. The example after
Corollary~\ref{cor:coincidence} proves sharpness.
\end{proof}

\Needspace{8\baselineskip}
\section*{Declaration on the use of AI}
The author used GPT-6 Astra, accessed through ChatGPT, in interactive
discussions to develop the mathematical arguments and exposition of this
manuscript. The author also used Claude Opus 5.5 (Anthropic), accessed
through the Claude app, to check the proofs and to suggest corrections
and improvements to the wording. The author takes full responsibility for the manuscript.

\Needspace{8\baselineskip}
\section*{Acknowledgements}
This paper was supported by the J\'anos Bolyai Research Scholarship of
the Hungarian Academy of Sciences. Project nos.\ 153080, 151504 and
152582 have been implemented with the support provided by the Ministry
of Culture and Innovation of Hungary from the National Research,
Development and Innovation Fund, financed under the ADVANCED 25,
EXCELLENCE 24 and SNN 25 funding schemes, respectively.

\end{document}